\def\bfI{\bm{I}}
\def\bfR{\bm{R}}
\def\bfX{\bm{X}}
\def\bfa{\bm{a}}
\def\bfp{\bm{p}}
\def\bft{\bm{t}}
\def\bfz{\bm{z}}
\def\bfmu{\bm{\mu}}
\def\bfGamma{\bm{\Gamma}}
\def\bfTheta{\bm{\Theta}}
\def\bfSigma{\bm{\Sigma}}
\def\bfOmega{\bm{\Omega}}
\def\bfPsi{\bm{\Psi}}
\newcommand{\cfA}{\mathcal{A}}
\newcommand{\cfC}{\mathcal{C}}
\newcommand{\cfF}{\mathcal{F}}
\newcommand{\cfN}{\mathcal{N}}
\newcommand{\cfP}{\mathcal{P}}
\newcommand{\cfR}{\mathcal{R}}
\newcommand{\cfT}{\mathcal{T}}
\newcommand{\cfX}{\mathcal{X}}
\newcommand{\g}{\,\vert\,}
\newcommand{\E}{\mbox{E}}
\newcommand{\Var}{\mbox{Var}}
\newcommand{\Be}{\mbox{Be}}
\newcommand{\Bi}{\mbox{Bi}}
\newcommand{\Di}{\mbox{Di}}
\newcommand{\Ga}{\mbox{Ga}}
\newcommand{\No}{\mbox{N}}
\newcommand{\St}{\mbox{St}}
\newcommand{\comb}[2]{{#1\choose #2}}
\newcommand{\h}{\hbox{{\small$1\over2$}}}
\newcommand{\m}[1]{\;\raise2pt\hbox{#1}}
\newcommand{\w}[1]{\mbox{#1}}
\newcommand{\wileysize}{
\setlength{\parindent}{10pt}
\setlength{\parskip}{2pt plus1pt}}
\newtheorem{teo}{Proposition}[section]
\newtheorem{ex}{Example}[section]
\newenvironment{exa}{\begin{list}{}{\vspace*{-24pt}\setlength{\leftmargin}{10pt}\setlength{\rightmargin}{\leftmargin}}\item\begin{ex}\em}{\end{ex}\end{list}}
\newcommand{\go}{\rightarrow}
\newcommand{\fr}{\frac}
\newtheorem{theorem0}{Theorem}[section]
\newtheorem{lemma0}{Lemma}[section]
\newtheorem{remark0}{Remark}[section]
\newtheorem{fact0}{Fact}[section]
\newtheorem{example0}{Example}[section]
\newtheorem{definition0}{Definition}[section]
\newtheorem{corollary0}{Corollary}[section]
\newtheorem{proposition0}{Proposition}[section]
\newtheorem{algorithmY}{Algorithm}[section]
\newenvironment{theorem}{\begin{theorem0} \mbox{} }{\end{theorem0}}
\newcommand{\reals}{\mbox{\textrm{I\kern-.20em R}}}
\newcommand{\expect}{\E}
\newtheorem{dfn}{Definition}
\newcommand{\bdfn}{\begin{dfn}\itshape}
\newcommand{\bdfnn}[1]{\begin{dfn}{\mbf(#1).}\itshape}
\newcommand{\edfn}{\end{dfn}}
\newcommand{\edfnn}{\end{dfn}}
\begin{document}

\begin{frontmatter}

\vspace*{5pt}
\title{Overall Objective Priors\thanksref{T1}}
\runtitle{Overall Objective Priors}

\relateddois{T1}{Related articles:
DOI:~\relateddoi[ms=BA935,title={Siva Sivaganesan. Comment on Article by Berger, Bernardo, and~Sun}]{Related item:}{10.1214/14-BA935},
DOI:~\relateddoi[ms=BA936,title={Manuel Mendoza, Eduardo Guti\'errez-Pe\~na. Comment on Article by Berger, Bernardo, and~Sun}]{Related item:}{10.1214/14-BA936},
DOI:~\relateddoi[ms=BA937,title={Judith Rousseau. Comment on Article by Berger, Bernardo, and Sun}]{Related item:}{10.1214/14-BA937},\hfill\break
DOI:~\relateddoi[ms=BA938,title={Gauri Sankar Datta, Brunero Liseo. Comment on Article by Berger, Bernardo, and Sun}]{Related item:}{10.1214/14-BA938};
rejoinder at
DOI:~\relateddoi[ms=BA943,title={James O. Berger, Jose M. Bernardo, Dongchu Sun. Rejoinder}]{Related item:}{10.1214/15-BA943}.}

\begin{aug}
\author{\fnms{James O.} \snm{Berger}\thanksref{t1}},
\author{\fnms{Jose M.} \snm{Bernardo}\thanksref{t2}},
\and
\author{\fnms{Dongchu} \snm{Sun}\thanksref{t3}}

\runauthor{J. O. Berger, J. M. Bernardo, and D. Sun}


\thankstext{t1}{Duke University, USA and King Abdulaziz University,
Saudi Arabia,
{berger@stat.duke.edu}}
\thankstext{t2}{Universitat de Val{\`e}ncia, Spain,
{jose.m.bernardo@uv.es}}
\thankstext{t3}{University of Missouri-Columbia, USA,
{sund@missouri.edu}}

\end{aug}

%
\begin{abstract}
In multi-parameter models, reference priors typically
depend on the parameter or quantity of interest, and
it is well known that this is necessary to produce objective posterior
distributions
with optimal properties. There are, however, many situations
where one is simultaneously interested in all the parameters
of the model or, more realistically, in functions of them that include
aspects such as prediction,
and it would then be useful to have a single objective prior
that could safely be used to produce reasonable posterior
inferences for all the quantities of interest. In this paper,
we consider three methods for selecting a single objective prior
and study, in a variety of problems including the multinomial problem,
whether or not the resulting prior is a reasonable overall prior.
\end{abstract}

%
\begin{keyword}
\kwd{Joint Reference Prior}
\kwd{Logarithmic Divergence}
\kwd{Multinomial Model}
\kwd{Objective Priors}
\kwd{Reference Analysis}
\end{keyword}


\end{frontmatter}


\section{Introduction}

\subsection{The problem}

Objective Bayesian methods, where the formal prior distribution is
derived from the assumed model rather than assessed from expert
opinions, have a long history (see \emph{e.g.}, Bernardo and Smith,
\citeyear
{BerSmi1994};
\citealp{KasWas1996}, and references therein). Reference priors
(Bernardo, \citeyear{Ber1979}, \citeyear{Ber2005};
Berger and Bernardo, \citeyear{BerBer1989}, \citeyear{BerBer1992a},b,
Berger, Bernardo and Sun, \citeyear{BerBerSun2009}, \citeyear{BerBerSun2012})
are a popular choice of objective prior.
Other interesting developments involving objective priors include
\citet{ClaBar1994}, \citet{ClaYua2004},
\citet{ConVerGut2004},
\citet{De2001},
\citet{De2006},
Datta and Ghosh (\citeyear{DatGho1995a,DatGho1995b}),
\citet{DatGho1996},
\citet{Dat2000},
\citet{Gho2011},
\citet{GhoMerLiu2011},
\citet{GhoRam2003},
\citet{Lis1993},
\citet{LisLop2006},
\citet{Siv1994},
\citet{SivLauMue2011} and
\citet{WalGut2011}.

In single parameter problems, the reference prior is uniquely defined
and is invariant under reparameterization. However, in multiparameter
models, the reference prior depends on the quantity of interest, \emph{e.g.},
the parameter concerning which inference is being performed.
Thus, if data $\bm{x}$ are assumed to have been generated from $p(\bm{x}
\g
\bm{\omega})$, with
$\bm{\omega}\in\bfOmega\subset\Re^k$, and one is interested in
$\theta
(\bm{\omega})\in\Theta\subset\Re$,
the reference prior $\pi_{\theta}(\bm{\omega})$, will typically depend
on~$\theta$;
the posterior distribution, $\pi_{\theta}(\bm{\omega}\g\bm
{x})\propto
p(\bm{x}
\g\bm{\omega})\,\pi_{\theta}(\bm{\omega})$,
thus also depends on $\theta$, and inference for $\theta$ is
performed using
the corresponding marginal reference posterior for $\theta(\bm{\omega})$,
denoted $\pi_{\theta}(\theta\g\bm{x})$.
The dependence of the reference prior on the quantity of interest has
proved necessary to obtain objective posteriors with appropriate
properties -- in particular, to have good frequentist coverage
properties (when attainable) and to avoid marginalization paradoxes and
strong inconsistencies.

There are however many situations where one is {\em simultaneously}
interested in all the parameters of the model or perhaps in several
functions of them. Also, in prediction and decision analysis,
parameters are not themselves the object of direct interest and yet an
overall prior is needed to carry out the analysis. Another situation in
which having an overall prior would be beneficial is when a user is
interested in a non-standard quantity of interest (\emph{e.g.}, a non-standard
function of the model parameters), and is not willing or able to
formally derive the reference prior for this quantity of interest.
Computation can also be a consideration; having to separately do
Bayesian computations with a different reference prior for each
parameter can be onerous. Finally, when dealing with non-specialists it
may be best pedagogically to just present them with one overall
objective prior, rather than attempting to explain the technical
reasons for preferring different reference priors for different
quantities of interest.

To proceed, let
$\bm{\theta}= \bm{\theta}(\bm{\omega})=\{\theta_1(\bm{\omega
}),\ldots,\theta
_m(\bm{\omega})\}$
be the set of $m>1$ functions of interest. Our goal is to find a joint prior
$\pi(\bm{\omega})$
whose corresponding marginal posteriors, $\{\pi(\theta_i\g\bm{x})\}
_{i=1}^m$, are sensible from a reference
prior perspective. This is not a well-defined goal, and so we will
explore various possible approaches to the problem.

\begin{exa}
{\bf Multinomial Example:} Suppose ${\bm{x}} =(x_1, \ldots, x_m)$ is
multinomial $\mbox{Mu}(\bm{x}\g n; \theta_1, \ldots, \theta_m)$, where
$\sum_{i=1}^m x_i=n$, and $\sum_{i=1}^m \theta_i=1$. In
\citet{BerBer1992b}, the reference prior is derived when the parameter
$\theta_{i}$ is
of interest, and this is a different prior for each $\theta_{i}$, as
given in the paper. The reference prior for $\theta_{i}$ results in a
Beta reference marginal posterior
$\Be(\theta_{i}\g x_{i}+\h, n-x_{i}+\h)$.
We would like to identify a single joint prior for $\bm{\theta}$ whose
marginal posteriors could be expected to be close to each of these
reference marginal posteriors, in some average sense.
\end{exa}

\subsection{Background}

It is useful to begin by recalling earlier efforts at obtaining an
overall reference prior.
There have certainly been analyses that can be interpreted as informal
efforts at obtaining an
overall reference prior. One example is given in \citet{BerSun2008}
for the five parameter bivariate normal model.
Priors for all the quantities of interest that had previously been
considered for the bivariate normal model
(21 in all) were studied from a
variety of perspectives. One such perspective was that of finding a
good overall prior, defined as one which yielded
reasonable frequentist coverage properties when used for at least the
most important quantities of interest.
The conclusion was that the prior $\pi^o(\mu_1,\mu_2,\sigma
_1,\sigma
_2,\rho) = 1/[\sigma_1 \sigma_2 (1-\rho^2)]$,
where the $\mu_i$ are the means, the $\sigma_i$ are the standard
deviations, and $\rho$ is the correlation
in the bivariate normal model,
was a good choice for the overall prior.

We now turn to some of the more formal efforts to create an overall
objective prior.

\subsubsection{Invariance-based priors}

If $p(\bm{x}\g\bm{\omega})$ has a group invariance structure, then the
recommended objective prior is \mbox{typically} the right-Haar prior.
Often this will work well for all\vadjust{\eject} parameters that
define the
\mbox{invariance} structure. For instance, if the sampling model is
$\No( x_i \g\mu, \sigma)$, the right-Haar prior is \mbox{$\pi(\mu
,\sigma) =\sigma^{-1}$,}
and this is fine for either $\mu$ or $\sigma$ (yielding the usual
objective posteriors). Such a nice situation
does not always obtain, however.

\begin{exa}
{\bf Bivariate Normal Distribution:} The right-Haar prior is not unique
for the bivariate normal problem. For instance, two
possible right-Haar priors are $\pi_1(\mu_1,\mu_2,\sigma_1,\sigma
_2,\rho
) = 1/[\sigma_1^2 (1-\rho^2)]$
and $\pi_2(\mu_1,\mu_2,\sigma_1,\sigma_2,\rho) = 1/[\sigma_2^2
(1-\rho
^2)]$. In \citet{BerSun2008}
it is shown that $\pi_i$ is fine for $\mu_i$, $\sigma_i$ and $\rho$,
but leads to problematical
posteriors for the other mean and standard deviation.

\end{exa}

The situation can be even worse if the right-Haar prior is used for
other parameters that can be considered.

\begin{exa}
{\bf Multi-Normal Means:}
\label{ex.multi-normal} Let $x_i$ be independent normal with mean $\mu
_i$ and variance $1$, for $i=1,\cdots,m$.
The right-Haar prior for $\bfmu=(\mu_1, \ldots, \mu_m)$ is just
a constant, which is fine for each of the individual normal means,
resulting in a sensible $\No(\mu_i \g x_i, 1)$ posterior
for each individual $\mu_i$.
But this prior is bad for overall quantities such as $\theta= \frac
{1}{m}|\bfmu|^2=\frac{1}{m}\sum_{i=1}^m \mu_i^2$, as discussed in
\citet{Ste1959} and Bernardo and Smith (1994, p.~365).
For instance, the resulting posterior mean of $\theta$ is $[1+\frac
{1}{m}\sum_{i=1}^m x_i^2 ]$, which is inconsistent as
$m \rightarrow\infty$ (assuming $\frac{1}{m}\sum_{i=1}^m \mu_i^2$ has
a limit); indeed, it is easy to show that then $[1+\frac{1}{m}\sum
_{i=1}^m x_i^2 ] \rightarrow[\theta_T +2]$, where $\theta_T$ is the
true value of $\theta$. Furthermore, the posterior distribution of
$\theta$ concentrates sharply around this
incorrect value.
\end{exa}

\subsubsection{Constant and vague proper priors}

\citet{Lap1812} advocated use of a constant prior as the overall
objective prior and this approach, eventually named {\em inverse probability},
dominated statistical practice for over~100 years. But the problems of
a constant prior are well-documented, including the following:
\begin{enumerate}
\item Lack of invariance to transformation, the main criticism directed
at Laplace's approach.
\item Frequent posterior impropriety.
\item Possible terrible performance, as in the earlier multi-normal
mean example.
\end{enumerate}

Vague proper priors (such as a constant prior over a large compact set)
are perceived by many as being adequate as an overall objective prior,
but they too have well-understood problems. Indeed, they are, at best,
equivalent to use of a constant prior, and so inherit most of the flaws
of a constant prior. In the multi-normal mean example, for instance,
use of $\No(\mu_i\g0, 1000)$ vague proper
priors results in a posterior mean for $\theta$ that is virtually
identical to the inconsistent posterior mean from the constant prior.

There is a common misperception that vague proper priors are safer than
a constant prior, since a proper posterior is guaranteed with
a vague proper prior but not for a constant prior.\vadjust{\eject}
But this actually
makes vague proper priors more dangerous than a constant prior.
When the constant prior results in an improper posterior distribution,
the vague proper prior will yield an essentially arbitrary posterior,
depending on the degree of vagueness that is chosen for the prior. And
to detect that the answer is arbitrary, one has to conduct a sensitivity
study concerning the degree of vagueness, something that can be
difficult in complex problems when several or high-dimensional vague
proper priors are used.
With the constant prior on the
other hand, the impropriety of the posterior will usually show up in
the computation---the Markov Chain Monte Carlo (MCMC) will not
converge---and hence can be recognized.

\subsubsection{Jeffreys-rule prior}

The Jeffreys-rule prior (Jeffreys, \citeyear{Jef1946}, \citeyear
{Jef1961}) is the same for all
parameters in a model, and is, hence, an obvious candidate for an
overall prior.
If the data
model density is $p(\bm{x}\g\bm{\theta})$ the Jeffeys-rule prior for
the unknown $\bm{\theta}=\{\theta_1,\ldots,\theta_m\}$ has the form
\[
\pi(\theta_1,\ldots,\theta_m)=|I(\bm{\theta})|^{1/2} ,
\]
where $I(\bm{\theta})$
is the $m\times m$ Fisher information matrix with $(i,j)$
element
\[
I(\bm{\theta})_{ij} =\E_{\bm{x}\g\bm{\theta}}\bigg[-
\frac{\partial^2}{\partial\theta_i\partial\theta_j} \log
p(\bm{x}\g\bm{\theta})\bigg]\,.
\]

This is the optimal objective prior (from many perspectives) for {\em
regular one-parameter} models, but has problems
for multi-parameter models. For instance, the right-Haar
prior in the earlier multi-normal mean problem is also the
Jeffreys-rule prior there, and was seen to result in an inconsistent estimator
of $\theta$. Even
for the basic $\No( x_i \g\mu, \sigma)$ model, the Jeffreys-rule prior
is $\pi(\mu,\sigma) = 1/ \sigma^2$, which results
in posterior inferences for $\mu$ and~$\sigma$ that have the wrong
`degrees of freedom.'

For the bivariate normal example, the Jeffreys-rule prior is $1/[\sigma
_1^2 \sigma_2^2 (1-\rho^2)^2]$; this yields the
natural marginal posteriors for the means and standard deviations, but
results in quite inferior objective posteriors for
$\rho$ and various derived parameters, as shown in \citet{BerSun2008}.
More, generally, the Jeffreys-rule prior for
a covariance matrix is studied in \citet{1994}, and shown to
yield a decidedly inferior posterior.

There have been efforts to improve upon the Jeffreys-rule prior, such
as consideration of the ``independence Jeffreys-rule prior," but
a general alternative definition has not resulted.

Finally, consider the following well-known example, which suggests
problems with the Jeffreys-rule prior even when it is proper.

\begin{exa}
\label{ex.multinomialspec}
{\bf Multinomial Distribution (continued):} Consider the multinomial
\mbox{example} where the sample size $n$ is small relative to
the number of classes~$m$; thus we have a large sparse table. The
Jeffreys-rule prior is the {\em proper} prior,
$\pi(\theta_1, \ldots, \theta_m) \propto\prod_{i=1}^{m} \theta
_i^{-1/2}$ , but is not a good candidate for the overall prior. For
instance, suppose $n=3$ and $m=1000$, with $x_{240}=2$, $x_{876}=1$,
and all the other $x_i=0$. The posterior means resulting from use of
the Jeffreys-rule prior are
\[
\E[\theta_i \g{\bm{x}}] = \frac{x_i+1/2}{\sum_{i=1}^{m}
(x_i+1/2)} =
\frac{x_i+1/2}{n + m/2}= \frac{x_i+1/2}{503} \m,
\]
so $\E[\theta_{240} \g{\bm{x}}] = \frac{2.5}{503}$, $\E[\theta_{876}
\g{\bm{x}}] = \frac{1.5}{503}$, $\E[\theta_{i} \g{\bm{x}}] =
\frac
{0.5}{503}$ otherwise.
So, cells 240 and 876 only have total posterior probability of $\frac
{4}{503} = 0.008$ even though all 3 observations are in these cells.
The problem is that the Jeffreys-rule prior effectively added 1/2 to
the 998 zero cells, making them more important than the cells with
data! That the Jeffreys-rule prior can encode much more information
than is contained in the data is hardly desirable
for an objective analysis.

An alternative overall prior that is sometimes considered is the
uniform prior on the simplex, but
this is even worse than the Jeffreys prior, adding 1 to each cell. The
prior that adds
0 to each cell is $\prod_{i} \theta_i^{-1}$, but this results in an
improper posterior if any cell has a zero entry, a virtual
certainty for very large tables.
\end{exa}

We actually know of no multivariable example in which we would
recommend the Jeffreys-rule prior. In higher dimensions, the prior
always seems to be either `too diffuse' as in the multinormal means
example, or `too concentrated' as in the multinomial example.

\subsubsection{Prior averaging approach}
Starting with a collection of reference (or other) priors
$\{\pi_i(\bm{\theta}), i=1,\ldots,m\}$ for differing parameters or
quantities of interest,
a rather natural approach is to use an average of the priors. Two
natural averages to
consider are the arithmetic mean
\[
\pi^A(\bm{\theta}) = \frac{1}{m}\sum\nolimits_{i=1}^m \pi_i(\bm
{\theta})
\, ,
\]
and the geometric mean
\[
\pi^G(\bm{\theta})
=\prod\nolimits_{i=1}^m \pi_i(\bm{\theta})^{1/m}\, .
\]
While the arithmetic average might seem most natural, arising from the
hierarchical reasoning of
assigning each $\pi_i$ probability $1/m$ of being correct, geometric
averaging arises naturally
in the definition of reference priors (\citealp{BerBerSun2009}),
and also is the optimal prior if one is trying to choose a single prior
to minimize the average of
the Kullback-Leibler (KL) divergences of the prior from the $\pi_i$'s
(a fact of which we were
reminded by Gauri Datta). Furthermore, the weights in arithmetic
averaging of improper priors are rather arbitrary
because the priors have no normalizing constants, whereas geometric
averaging is unaffected by
normalizing constants.

\begin{exa}
{\bf Bivariate Normal Distribution (continued):}
Faced with the two right-Haar priors in this problem,
\[
\pi_1(\mu_1,\mu_2,\sigma_1,\sigma_2,\rho) =\sigma_1^{-2} (1-\rho
^2)^{-1},\qquad
\pi_2(\mu_1,\mu_2,\sigma_1,\sigma_2,\rho) =\sigma_2^{-2} (1-\rho
^2)^{-1},
\]
the two average priors are
\begin{eqnarray}
\label{eq.S}
\pi^A(\mu_1,\mu_2,\sigma_1,\sigma_2,\rho) &=& \frac{1}{2\sigma_1^2
(1-\rho^2)} + \frac{1}{2\sigma_2^2 (1-\rho^2)}\m,\\
\pi^G(\mu_1,\mu_2,\sigma_1,\sigma_2,\rho) &=& \frac{1}{\sigma_1
\sigma
_2 (1-\rho^2)} \,.
\end{eqnarray}
\vadjust{\eject}

Interestingly, \citet{SunBer2007} show that $\pi^A$ is a worse
objective prior than either right-Haar prior alone,
while $\pi^G$ is the overall recommended objective prior.
\end{exa}

One problem with the averaging approach is that each of the reference priors
can depend on all of the other parameters, and not just the parameter
of interest, $\theta_i$, for which it was created.
\begin{exa}
{\bf Multinomial Example (continued):} The reference prior derived when
the parameter of interest is~$\theta_i$ actually depends on the
sequential ordering chosen for
all the parameters (e.g. $\{\theta_i, \theta_1, \theta_2, \ldots,
\theta
_{i-1}, \theta_{i+1},\ldots,\theta_m\}$) in the reference prior derivation;
there are thus $(m-1)!$ different reference priors for each parameter
of interest.
Each of these reference priors will result in the same marginal
reference posterior
for~$\theta_i$,
\[
\pi_{\theta_i}(\theta_i\g\bm{x})=\Be(\theta_{i}\g x_{i}+\h,
n-x_{i}+\h),
\]
but the full reference prior and the full posterior,
$\pi_{\theta_i}(\theta\g\bm{x})$, do depend on the ordering of the
other parameters. There are
thus a total of $m!$ such full reference priors to be averaged, leading
to an often-prohibitive computation.
\end{exa}

In general, the quality of reference priors as overall priors is
unclear, so there is no obvious sense in which an
average of them will make a good overall reference prior. The prior
averaging approach is thus best viewed
as a method of generating interesting possible priors for further
study, and so will not be considered further herein.

\subsection{Three approaches to construction of the overall prior}

\subsubsection{Common reference prior}
If the reference prior that is computed for any parameter of the model
(when declared to be the parameter of interest)
is the same, then this common reference prior is the natural choice for
the overall prior. This is illustrated
extensively in Section \ref{sec.common}; indeed, the section attempts
to catalogue the situations in which this
is known to be the case, so that these are the situations with a
ready-made overall prior.

\subsubsection{Reference distance approach}
In this approach, one seeks a prior that will yield marginal
posteriors, for each $\theta_i$ of interest, that
are close to the set of reference posteriors
$\{\pi(\theta_i\g\bm{x})\}_{i=1}^m$ (yielded by the set of
reference priors
$\{\pi_{\theta_i}(\bm{\omega})\}_{i=1}^m$), in an average sense
over both
posteriors and data $\bm{x}\in\cfX$.

\begin{exa}
{\bf Multinomial Example (continued):} In Example~\ref
{ex.multinomialspec} consider, as an overall prior, the Dirichlet $\Di
(\bm{\theta}\g a, \ldots, a)$ distribution, having density
proportional to
$\prod_i \theta_i^{a-1}$, leading to
$\Be(\theta_{i}\g x_{i}+a, n-x_{i}+(m-1)a)$ as the marginal posterior
for $\theta_i$. In Section~\ref{sub.multinomial}, we will study which
choice of $a$ yields marginal posteriors that are as
close as possible to the reference marginal posteriors $\Be(\theta
_{i}\g x_{i}+\h, n-x_{i}+\h)$, arising
when $\theta_i$ is the parameter of interest. Roughly, the recommended
choice is $a=1/m$, resulting
in the overall prior $\pi^o(\theta_1, \ldots, \theta_m) \propto
\prod
_{i=1}^m \theta_i^{(1 -m)/m}$.
Note that this distribution adds only $1/m = 0.001$ to each cell in
the earlier example, so that
\[
\E[\theta_i \g{\bm{x}}] = \frac{x_i+1/m}{\sum_{i=1}^{m}
(x_i+1/m)} =
\frac{x_i+1/m}{n +1}= \frac{x_i+0.001}{4} \,.
\]
Thus $\E[\theta_{240} \g{\bm{x}}] \approx0.5$, $\E[\theta_{876}
\g
{\bm{x}}] \approx0.25$, and $\E[\theta_{i} \g{\bm{x}}] \approx
\frac
{1}{4000}$ otherwise, all sensible results.
\end{exa}

\subsubsection{Hierarchical approach}
Utilize hierarchical modeling to transfer the reference prior problem
to a `higher level' of the model (following the advice of I.~J.~Good).
In this approach one
\begin{enumerate}
\item Chooses a class of {\em proper} priors $\pi(\bm{\theta}\g a)$
reflecting the desired structure of the
problem.
\item Forms the marginal likelihood
$p(\bm{x}\g a) = \int p(\bm{x}\g a)\pi(\bm{\theta}\g a) \ d\bm
{\theta}$.
\item Finds the reference prior, $\pi^R(a)$, for $a$ in this marginal model.
\end{enumerate}
Thus the overall prior becomes
\[
\pi^o(\bm{\theta}) = \int\pi(\bm{\theta}\g a)\, \pi^R(a)\,da \,,
\]
although computation is typically easier by utilizing both $\bm{\theta}$
and $a$ in the computation
rather than formally integrating out $a$.
\par

\begin{exa}
{\bf Multinomial (continued)} The Dirichlet $\Di(\bm{\theta}\g a,
\ldots
, a)$ class of priors is natural
here, reflecting the desire to treat all the $\theta_i$ similarly. We
thus need only to find the reference prior for $a$ in the marginal model,
\begin{eqnarray}
p(\bm{x}\g a) &=& \int{\left(
\begin{array}{c}
n \\
x_1 \ldots x_m \\
\end{array}
\right)}
\left( \prod_{i=1}^{m} \theta_i^{x_i}\right) \frac{\Gamma(m\,
a)}{\Gamma
(a)^m} \prod_{i=1}^{m} \theta_i^{a-1} d{\bm{\theta}} \nonumber\\
&=& { \left(
\begin{array}{c}
n \\
x_1 \ldots x_m \\
\end{array}
\right)} \frac{\Gamma(m\,a)}{\Gamma(a)^m} \frac{\prod
_{i=1}^{m}\Gamma
(x_i+a)}{\Gamma(n+m\,a)} \m.
\label{eq.good}
\end{eqnarray}
The reference prior for $\pi^R(a)$ would just be the Jeffreys-rule
prior for this marginal model; this is computed in Section~\ref
{sec.hierarchical}. The implied prior for $\bm{\theta}$ is, of course
\[
\pi(\bm{\theta}) = \int\Di(\bm{\theta}\g a)\, \pi^R(a) \,da \,.
\]
Interestingly, $\pi^R(a)$ turns out to be a proper prior, necessary
because the marginal likelihood is bounded away from zero as $a
\rightarrow\infty$.

As computations in this hierarchical setting are more complex, one
might alternatively simply choose the Type-II maximum likelihood
estimate---\emph{i.e.},\ the value of~$a$ that
\mbox{maximizes} (\ref{eq.good})---at least when $m$ is large enough so
that the empirical Bayes procedure
can be expected to be close to the full Bayes procedure. For the data
given in the earlier example (one cell having two counts, another one
count, and the rest zero counts), this marginal likelihood is
proportional to
$[a(a+1)]/[(m\,a+1)(m\,a+2)]$,
which is maximized at roughly $a = \sqrt{2}/m$. In Section~\ref
{sec.hierarchical} we will see that it is actually
considerably better to maximize the reference posterior for~$a$, namely
$\pi^R(a\g\bm{x})\propto p(\bm{x}\g a)\,\pi^R(a)$, as it can be
seen that
the marginal likelihood does not go to zero as $a \rightarrow\infty$ and
the mode may not even exist.
\end{exa}

\subsection{Outline of the paper}

Section \ref{sec.common} presents known situations in which the
reference priors for any parameter (of interest) in the model are
identical. This section is thus the beginnings of a catalogue of good
overall objective priors. Section~\ref{sec.refdistance} formalizes the
reference distance approach and applies it to two models---the
multinomial model and
the normal model where the coefficient of variation is also a
parameter of interest. In Section~\ref{sec.hierarchical} we consider
the hierarchical
prior modeling approach, applying it to three models---the multinomial
model, a hypergeometric model, and the multinormal model---and
misapplying it to the bivariate normal model. Section~\ref{discussion}
presents conclusions.

\section{Common reference prior for all parameters}
\label{sec.common}

In this section we discuss situations where the reference prior is
unique, in the
sense that it is the same no matter which of the specified model
parameters is taken
to be of interest and which of the possible
possible parameter orderings is used in the derivation. (In general, a
reference prior will
depend on the parameter ordering used in its derivation.) This unique
reference prior is typically an excellent choice for the overall prior.

\subsection{Structured diagonal Fisher information matrix}
Consider a parametric family $p(\bm{x}\g\bm{\theta})$ with unknown parameter
$\bm{\theta}=(\theta_1,\theta_2,\cdots,\theta_k)$. For any parameter
$\theta_i$, let
$\bm{\theta}_{-i}=(\theta_1,\cdots,\theta_{i-1},\theta_{i+1},
\cdots,
\theta_k)$
denote the parameters other than $\theta_i$.
The following theorem encompasses a number of important situations in which
there is a common reference prior for all parameters.

\begin{theorem} \label{th_001}
Suppose that the Fisher information matrix of $\bm{\theta}$ is of the form,
\begin{eqnarray} \label{Fisher_form_1}
\bfI(\bm{\theta})= \mbox{\textrm{diag}}
(f_1(\theta_1)g_1(\bm{\theta}_{-1}),
f_2(\theta_2)g_2(\bm{\theta}_{-2}),
\cdots,
f_k(\theta_m)g_k(\bm{\theta}_{-k})),
\end{eqnarray}
where $f_i$ is a positive function of $\theta_i$
and $g_i$ is a positive function of $\bm{\theta}_{-i},$ for
$i=1,\cdots,k.$
Then the one-at-a-time reference prior, for any chosen
parameter of interest and any ordering of the nuisance parameters in
the derivation, is given by
\begin{eqnarray} \label{prior_001}
\pi^R(\bm{\theta}) \propto
\sqrt{f_1(\theta_1) f_2(\theta_2)\cdots f_k(\theta_k)}.
\end{eqnarray}
\end{theorem}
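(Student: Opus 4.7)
The plan is to apply the Berger--Bernardo one-at-a-time reference prior algorithm directly, exploiting the diagonal-with-factored-entries structure of $\bfI(\bm{\theta})$ to show that each conditional prior produced by the algorithm depends on only one coordinate. Fix any ordering $(\theta_{j_1},\ldots,\theta_{j_k})$ with $\theta_{j_1}$ the parameter of interest and $\theta_{j_2},\ldots,\theta_{j_k}$ the nuisance parameters in the order in which they will be processed. The algorithm constructs
\[
\pi^R(\bm{\theta}) = \prod_{\ell=1}^{k} \pi_\ell\bigl(\theta_{j_\ell}\g\theta_{j_1},\ldots,\theta_{j_{\ell-1}}\bigr),
\]
building these up by starting at $\ell=k$ and working backward, with each factor defined as the Jeffreys-rule prior of the appropriate one-parameter conditional model.

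At the last stage, the conditional Fisher information for $\theta_{j_k}$ given the remaining parameters is simply the diagonal entry $\bfI_{j_k,j_k}(\bm{\theta}) = f_{j_k}(\theta_{j_k})g_{j_k}(\bm{\theta}_{-j_k})$, since the Schur complement of a diagonal matrix is trivial. The corresponding conditional Jeffreys prior is therefore proportional to $\sqrt{f_{j_k}(\theta_{j_k})}\sqrt{g_{j_k}(\bm{\theta}_{-j_k})}$, and after normalization the factor $\sqrt{g_{j_k}(\bm{\theta}_{-j_k})}$, which does not depend on $\theta_{j_k}$, is absorbed into the normalizing constant, leaving $\pi_k(\theta_{j_k}\g\bm{\theta}_{-j_k})\propto\sqrt{f_{j_k}(\theta_{j_k})}$. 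The structural point that makes the induction possible is that, because $\bfI(\bm{\theta})$ is diagonal, the reduced marginal model obtained after integrating out $\theta_{j_k}$ against this conditional prior again has diagonal Fisher information with entries of the same factored form in the remaining $k-1$ coordinates. Iterating backward through $\ell=k-1,k-2,\ldots,1$ yields $\pi_\ell(\theta_{j_\ell}\g\cdot)\propto\sqrt{f_{j_\ell}(\theta_{j_\ell})}$ at every stage, and multiplying these factors reproduces (\ref{prior_001}). Since the product $\prod_{i=1}^k\sqrt{f_i(\theta_i)}$ is permutation-invariant, neither the choice of parameter of interest nor the ordering of the nuisances affects the final answer.

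The main obstacle I anticipate is the rigorous treatment of the improper-prior limit inside the formal Berger--Bernardo definition. The algorithm is actually defined on an increasing sequence of compact subsets $\bfTheta_n\uparrow\bfTheta$ on which each conditional prior is normalizable, and one must check that the normalizations and the marginal reductions described above behave well as $n\to\infty$. The factorization hypothesis is precisely what renders this limit transparent: choosing product-shaped compact sets $\bfTheta_n = \prod_i \Theta_{i,n}$, the normalizing integrals split into a $\theta_{j_\ell}$-integral carrying all of the $f_{j_\ell}$-dependence and a separate integral in the remaining coordinates involving only $g_{j_\ell}$. Consequently the ratios appearing in the definition of the reference prior cancel the $g$-factors identically at every finite stage, so the $n\to\infty$ limit is immediate and delivers exactly the claimed form.
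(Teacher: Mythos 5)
Your proof is essentially correct, but it takes a genuinely different route from the paper: the paper's entire proof is the single line ``The result follows from Datta and Ghosh (1996),'' i.e.\ an appeal to their general theorem on noninformative priors under this kind of orthogonal, factored information structure, whereas you reconstruct the result directly from the Berger--Bernardo one-at-a-time algorithm. Your route buys self-containedness and makes visible exactly \emph{why} the hypothesis works (each $g_{j_\ell}$ is free of $\theta_{j_\ell}$ and so washes out in the stagewise normalization, and the resulting product $\prod_i \sqrt{f_i(\theta_i)}$ is permutation-invariant); the paper's route buys brevity and inherits from Datta--Ghosh a rigorous treatment of the compact-set limits that you only sketch. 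One point in your write-up deserves correction: the algorithm does not proceed by ``integrating out $\theta_{j_k}$ to obtain a reduced marginal model and computing its Fisher information'' --- the Fisher information of an integrated-out model is not in general the relevant object and need not retain the diagonal factored form. What the algorithm actually uses at stage $\ell$ is $\exp\{\tfrac12 \mathrm{E}[\log h_\ell]\}$, where $h_\ell$ is the appropriate entry built from the \emph{original} information matrix (here simply $h_\ell = f_{j_\ell}(\theta_{j_\ell})\,g_{j_\ell}(\bm{\theta}_{-j_\ell})$ since $\bfI$ is diagonal) and the expectation is taken over the later-stage parameters under their already-derived conditional priors. Since $\log h_\ell$ splits as $\log f_{j_\ell}(\theta_{j_\ell}) + \log g_{j_\ell}(\cdot)$ and the expectation of the second term does not involve $\theta_{j_\ell}$, it cancels upon normalization --- which is precisely the cancellation you describe in your final paragraph, so the conclusion of your argument stands once this step is phrased correctly.
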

\begin{proof}
The result follows from \citet{DatGho1996}.
\end{proof}

This prior is also what was called the {\em independent reference prior}
in \citet{SunBer1998}, and is the most natural definition of an
{\em independence Jeffreys prior} under condition (\ref{Fisher_form_1}).
Note that being the common reference prior for all of the original
parameters of interest in the model does not guarantee
that $\pi^R$ will be the reference prior for every potential parameter
of interest (see Section \ref{sec.exact})
but, for the scenarios in which an overall prior is
desired, this unique reference prior for all natural parameters is
arguably optimal.

A simple case in which (\ref{Fisher_form_1}) is satisfied is when the
density is of the form
\begin{equation}
\label{eq.simple}
p( \bm{x}\g\bm{\theta}) = \prod_{i=1}^k p_i(\bm{x}_i \g\theta
_i) \,,
\end{equation}
with $\bm{x}$ decomposable as $\bm{x}= (\bm{x}_1, \ldots, \bm
{x}_k)$. In this
case the (common to all parameters) reference
prior is simply the product of the reference priors for each of the
separate models
$p_i(\bm{x}_i \g\theta_i)$; this is also the Jeffreys-rule prior.

\subsubsection{Bivariate binomial distribution}

\citet{CroSwe1989} consider the following bivariate binomial
distribution,
whose probability density is given by
\begin{eqnarray*}
p(r,s \g\theta_1,\theta_2) ={m \choose r} \theta_1^r(1-\theta_1)^{m-r}
{r \choose s} \theta_2^s(1-\theta_2)^{r-s},
\end{eqnarray*}
where $0<\theta_1, \theta_2<1$, and $s$ and $r$ are nonnegative
integers satisfying
$0\le s \le r \le n$.
The Fisher information matrix for $(\theta_1,\theta_2)$ is given by
\begin{eqnarray*}
\bfI(\theta_1,\theta_2) = n \mbox{~diag} [\{\theta_1(1-\theta_1)\}
^{-1},~ \theta_1\{\theta_2 (1-\theta_2)\}^{-1}] \,,
\end{eqnarray*}
which is of the form (\ref{Fisher_form_1}). (Note that this
density is not of the form (\ref{eq.simple}).) Hence the reference
prior, when either $\theta_1$ or $\theta_2$ are the parameter of
interest, is
\[
\pi^R(\theta_1,\theta_2) \propto\{\theta_1(1-\theta_1) \theta
_2(1-\theta_2)\}^{-\fr{1}{2}} \,,
\]
\emph{i.e.},\ independent Beta, $\Be(\theta_i\g1/2,1/2)$
distributions for
$\theta
_1$ and $\theta_2$; this reference prior was first formally derived for
this model by \citet{PolWas1990}.
This is thus the overall recommended prior for this model.

\subsubsection{Multinomial distribution for directional data}

While we have already seen that determining an overall reference prior
for the
multinomial distribution is challenging, there is a special case of the
distribution
where doing so is possible. This happens when the cells are ordered or
directional.
For example, the cells could be grades for a class such as A, B, C, D,
and F;
outcomes from an attitude survey such as strongly agree, agree,
neutral, disagree,
and strongly disagree; or discrete survival times.
Following Example 1.1 (multinomial example), with this cell ordering,
there is a natural reparameterization of the
multinomial probabilities into the conditional probabilities
\begin{eqnarray}\label{hazard_rates_1}
\xi_j=\fr{\theta_j}{\theta_j+\cdots+\theta_{m}}, \mbox{~for~}
j=1,\cdots
,m-1 \,.
\end{eqnarray}
Here $\xi_j$ is the conditional probability of
an observation being in cell $j$ given that the observation is in cells
$j$ to $m$.
The Fisher information matrix of $(\xi_1,\cdots,\xi_{m-1})$ is
\begin{eqnarray}\label{fisher_information_1}
\bfI^*(\xi_1,\xi_2\cdots,\xi_{m-1}) = n~ \mbox{\textrm
{diag}}(\eta_1,\cdots,
\eta_{m-1}),
\end{eqnarray}
where
\begin{eqnarray*}
\delta_j &=& \frac{1}{\xi_j(1-\xi_j)} \prod_{i=1}^{j-1}(1-\xi_i),
\end{eqnarray*}
for $j=1,\cdots,m-1.$
Clearly (\ref{fisher_information_1}) is of the form (\ref{Fisher_form_1}),
from which it immediately follows that the one-at-a-time reference prior
for any of the parameters $(\xi_1,\xi_2\cdots,\xi_{m-1})$
(and any ordering of them in the derivation)
is the product of independent Beta $(1/2, 1/2)$ distributions
for the $\xi_j$ for $j=1,\ldots m-1$.
This is the same as Berger and Bernardo's (\citeyear{BerBer1992b})
reference prior for
this specific ordering of cells.

\subsubsection{A two-parameter exponential family}

Bar-Lev and Reiser (\citeyear{BarRei1982})
considered the following two-parameter
exponential family density:
\begin{eqnarray}
\label{twopar}
p(x \g\theta_1,\theta_2) &=&
a(x)\exp\{\theta_1 U_1(x) - \theta_1 G_2^{\prime}(\theta_2)
U_2(x)-\psi(\theta_1,\theta_2) \},
\end{eqnarray}
where the $U_i(\cdot)$ are to be specified, $\theta_1<0$, $\theta
_2=\expect\{ U_2(X) \g(\theta_1,\theta_2)\}$,
the $G_i(\cdot)$'s, are infinitely differentiable functions with
$G_i^{\prime\prime}>0,$
and $\psi(\theta_1,\theta_2)=-\theta_1
\{\theta_2 G_2^{\prime}(\theta_2)- G_2(\theta_2)\}+G_1(\theta_2)$.
This is a large class of distributions, which includes, for suitable
choices of $G_1$, $G_2$, $U_1$ and $U_2$, many popular statistical
models such as the normal, inverse normal, gamma, and inverse gamma.
Table 1, reproduced from \citet{Sun1994}, indicates how each
distribution arises.

\medskip
\noindent{Table 1. Special cases of Bar-Lev and Reiser's (\citeyear
{BarRei1982}) two
parameter exponential family, where $h(\theta_1)=-\theta_1+\theta_1
\log(-\theta_1) +\log(\Gamma(-\theta_1)).$ }
\begin{center}
\begin{tabular}{||c||c|c|c|c|c|c||} \hline\hline
& $G_1(\theta_1)$ & $G_2(\theta_2)$ &
$U_1(x)$ &$U_2(x)$ &$\theta_1$ & $\theta_2$ \\ \hline\hline
Normal $(\mu,\sigma)$ & $-\fr{1}{2}\log(-2\theta_1)$ & $\theta
_2^2$ &
$x^2$ & $x$ & $-1/(2\sigma^2)$ & $\mu$ \\ \hline
Inverse Gaussian & $-\fr{1}{2}\log(-2\theta_1)$ & $1/\theta_2$ &
$1/x$ & $x$ & $-\alpha/2$ & $\sqrt{\alpha/\mu}$ \\ \hline
Gamma & $h(\theta_1)$ & $-\log\theta_2$ &
$-\log x$ & $x$ & $-\alpha$ & $\mu$ \\ \hline
Inverse Gamma & $h(\theta_1)$ & $-\log\theta_2$ &
$\log x$ & $1/x$ & $-\alpha$ & $\mu$ \\ \hline\hline
\end{tabular}
\end{center}

The Fisher information matrix of $(\theta_1,\theta_2)$ based on (\ref{twopar}) is
\begin{eqnarray*} 
{\bfI}(\theta_1,\theta_2)= \left(
\begin{array}{cc}
G_1''(\theta_1) & 0\\
0& -\theta_1 G_2''(\theta_2) \\
\end{array}
\right)\,,
\end{eqnarray*}
which is of the form (\ref{Fisher_form_1}). Thus, when either $\theta
_1$ or $\theta_2$ is the parameter of interest,
the one-at-a-time reference prior (first shown in \citet{SunYe1996}) is
\begin{eqnarray} \label{twopar_ref}
\pi^R (\theta_1,\theta_2)= \sqrt{G_1''(\theta_1) G_2''(\theta_2)}.
\end{eqnarray}

For the important special case of the Inverse Gaussian density,
\begin{eqnarray}
p(x\g\alpha, \psi) = ( {\alpha}/{2 \pi x^3})^{1/2}
\exp\Bigl\{- \fr{1}{2}{\alpha x}( {1}/{x}-\psi)^2 \Bigl\}, ~x>0
\end{eqnarray}
where $\alpha>0,\psi>0$, the common reference prior (and overall
recommended prior) is
\begin{eqnarray}
\pi^R (\alpha,\psi) \propto\fr{1}{\alpha\sqrt{\psi}} \,.
\end{eqnarray}
The resulting marginal posteriors of $\alpha$ and $\psi$ can be found
in \citet{SunYe1996}.

For the important special case of the Gamma $(\alpha, \mu)$ density,
\begin{eqnarray}
p(x\g\alpha,\mu) = {\alpha^{\alpha} x^{\alpha-1} }
\exp(-{\alpha x}/{\mu})/ \{{\Gamma(\alpha) \mu^{\alpha}}\},
\end{eqnarray}
the common reference prior (and overall recommended prior) is
\begin{eqnarray}
\pi^R (\alpha,\mu) \propto\fr{\sqrt{\alpha\xi(\alpha)-1}}
{\sqrt
{\alpha} \mu},
\end{eqnarray}
where $\xi(\alpha) = (\partial^2/\partial\alpha^2) \log\{ \Gamma
(\alpha
)\}$ is
the polygamma function.
The resulting marginal posteriors of $\alpha$ and $\mu$ can be found in
\citet{SunYe1996}.

\subsubsection{A stress-strength model}
Consider the following stress-strength system, where $Y$, the
strength of the system, is subject to stress $X$. The system fails
at any moment the applied stress (or load) is greater than the strength
(or resistance). The reliability of the system is then given by
\begin{eqnarray}
\theta= P(X\le Y) \,.
\end{eqnarray}

An important instance of this situation was described in \citet
{EniGei1971}, where $X_1,\cdots,X_m$, and
$Y_1,\cdots, Y_n$ are independent random samples from exponential
distributions with unknown means
$\eta_1$ and $\eta_2$, in which case
\begin{eqnarray}
\theta=\eta_1/(\eta_1+\eta_2) \,.
\end{eqnarray}

As the data density is of the form (\ref{eq.simple}),
the (common to all parameters) reference prior is easily seen
to be $\pi^R(\eta_1,\eta_2) = 1/(\eta_1\eta_2)$, which is
also the Jeffreys prior as noted in \citet{EniGei1971}.
Our interest, however, is primarily in $\theta$.
Defining the nuisance parameter to be
$\psi=\eta_1^{(m+n)/n} \eta_2^{(m+n)/m}$,
the resulting Fisher information matrix is
\[
\bfI(\theta,\psi)=\w{diag}
\left(\frac{mn}{(m+n)\theta^2(1-\theta)^2}\m,
\frac{m^2 n^2}{(m+n)^3 \psi^2}\right),\vadjust{\eject}
\]
again of the form (\ref{Fisher_form_1}).
So the Jeffreys prior and the one-at-a-time reference prior of any
ordering for
$\theta$ and $\psi$ is
$\pi^R(\theta,\psi) = 1/\{\theta(1-\theta)\psi\}$, which can be seen
to be the transformed version of $\pi^R(\eta_1,\eta_2) = 1/(\eta
_1\eta_2)$.
So the Jeffreys prior is also the one-at-a-time reference prior for
$\theta$.
\citet{GhoSun1998} showed that this prior is the second order
matching prior
for $\theta$ when $m/n\go a >0$.

\subsection{Other scenarios with a common reference prior}
A common reference prior can exist in scenarios not covered by Theorem
\ref{th_001}.
Two such situations are considered here, the first which leads to a
fine overall
prior and the second which does not.

\subsubsection{The location-scale family}
\label{sec.LS}

Consider the location-scale family having density
\begin{eqnarray*}
p(x \g\mu, \sigma) = \frac{1}{\sigma} g\Big(\frac{x-\mu}{\sigma
} \Big),
\end{eqnarray*}
where $g$ is a specified density function and $\mu\in\reals$ and
$\sigma
>0$ are both unknown.
The Fisher information of $(\mu,\sigma)$ is
\begin{eqnarray} \label{Fisher_location_scale}
\bfI(\mu,\sigma)=\frac{1}{\sigma^2} \left(
\begin{array}{ll}
\mbox{$\int\frac{[g'(y)]^2}{g(y)} dy $} &
\mbox{$\int\{ y \frac{[g'(y)]^2}{g(y)} + g'(y) \} dy $} \\
\mbox{$\int\{ y \frac{[g'(y)]^2}{g(y)} + g'(y) \} dy $} &
\mbox{$\int\frac{[yg'(y)+g(y)]^2}{g(y)} dy $} \\
\end{array}
\right) \,.
\end{eqnarray}
Although this is not of the form (\ref{Fisher_form_1}), it is easy to
see that
the one-at-a-time reference prior for either $\mu$ or $\sigma$ is
$\pi^R(\mu,\sigma)= 1/\sigma$. This prior is also the
right-Haar prior for the location-scale group, and known to result in
Bayes procedures with optimal frequentist properties. Hence it is clearly
the recommended overall prior.

\subsubsection{Unnatural parameterizations}

A rather unnatural parameterization for the bivariate normal model
arises by defining
$\psi_1 = 1/\sigma_1, \psi_2=1/\sqrt{\sigma_2^2(1-\rho^2)},$
and $\psi_3=-\rho\sigma_2/\sigma_1.$
From \citet{BerSun2008},
the Fisher information matrix for the parameterization $(\psi_1,\psi
_2,\psi_3,\mu_1,\mu_2)$ is
\begin{eqnarray}
\bfI= \mbox{\textrm{diag}} \Big(
\frac{2}{\psi_1^2}, \frac{2}{\psi_2^2},\frac{\psi_2^2}{\psi_1^2},
\bfSigma^{-1} \Big),
\end{eqnarray}
where $\bfSigma^{-1}=
\left(
\begin{array}{ll}
\mbox{$\psi_1^2+\psi_2^2\psi_3^2$} & \mbox{$\psi_2^2\psi_3$} \\
\mbox{$\psi_2^2\psi_3$} & \mbox{$\psi_2^2$}
\end{array}
\right).$
While this is not of the form (\ref{Fisher_form_1}), direct
computation shows that the one-at-a-time reference prior for
any of these five parameters and under any ordering is
\begin{eqnarray}
\pi^R(\psi_1,\psi_2,\psi_3,\mu_1,\mu_2) = \frac{1}{\psi_1\psi_2}.
\end{eqnarray}
Unfortunately, this is equivalent to the right Haar prior,
$\pi^H(\sigma_1,\sigma_2,\rho,\mu_1,\mu_2) = \frac{1}{\sigma
_1^2(1-\rho^2)}$,
which we have argued is not a good overall prior. This suggests\vadjust
{\eject} that the
parameters used in this `common reference prior' approach need to be
natural, in some sense, to result in a good overall prior.

\section{Reference distance approach}
\label{sec.refdistance}
Recall that the goal is to identify a single overall prior $\pi
(\bm{\omega}
)$ that can be systematically used for all the parameters $\bm{\theta}=
\bm{\theta}(\bm{\omega})=\{\theta_1(\bm{\omega}),\ldots,\theta
_m(\bm{\omega}
)\}$ of
interest.
The idea of the reference distance approach is to find a $\pi(\bm
{\omega})$
whose corresponding marginal posteriors, $\{\pi(\theta_i\g\bm{x})\}
_{i=1}^m$ are close, in an average sense, to the reference posteriors
$\{\pi_i(\theta_i\g\bm{x})\}_{i=1}^m$ arising from the separate
reference priors
$\{\pi_{\theta_i}(\bm{\omega})\}_{i=1}^m$ derived under the assumption
that each of the $\theta_i$'s is of interest.
(In situations where reference priors are not unique for a parameter of
interest, we assume other considerations
have been employed to select a preferred reference prior.)
In the remainder of the paper, $\bm{\theta}$ will equal $\bm{\omega
}$, so we
will drop $\bm{\omega}$ from the notation.

We first consider the situation where the problem has an exact solution.

\subsection{Exact solution}
\label{sec.exact}

If one is able to find a single joint prior $\pi(\bm{\theta})$
whose corresponding marginal posteriors are precisely equal
to the reference posteriors for each of the $\theta_i$'s, so that, for
all $\bm{x}\in\cfX$,
\begin{equation}
\label{eq.exact-solution}
\pi(\theta_i\g\bm{x})=\pi_i(\theta_i\g\bm{x}),\quad i=1,\ldots
,m \,,
\end{equation}
then it is natural to argue that this should be an appropriate solution
to the problem.
The most important situation in which this will happen is when there is
a common reference prior
for each of the parameters, as discussed in Section \ref{sec.common}.
It is conceivable that there could be more than one
overall prior that would satisfy (\ref{eq.exact-solution}); if this
were to happen it is not
clear how to proceed.

\begin{exa}{\em Univariate normal data.}
Consider data $\bm{x}$ which consist of a random sample of normal observations,
so that \hbox{$p(\bm{x}\g\bm{\theta})=p(\bm{x}\g\mu, \sigma
)=\prod
_{i=1}^n\No
(x_i \g\mu, \sigma)$,} and suppose that one is equally interested in
$\mu$ (or any one-to-one transformation of~$\mu$) and $\sigma$ (or any
one-to-one transformation
of $\sigma$, such as the variance $\sigma^2$, or the precision
$\sigma
^{-2}$.) The common reference prior when any of these is the quantity
of interest is known to be the right Haar prior
$\pi_{\mu}(\mu, \sigma)=\pi_{\sigma}(\mu, \sigma)= \sigma^{-1}$,
and this is thus an exact solution to the overall prior problem under
the reference distance approach (as is also
clear from Section \ref{sec.LS}, since this is a location-scale family).

Interestingly, this prior also works well for making {\em joint
inferences on $(\mu, \sigma)$} in
that it can be verified that the corresponding joint credible regions
for $(\mu, \sigma)$ have appropriate coverage properties.
This does not mean, of course, that the overall prior is necessarily
good for any function of the two parameters.
For instance, if the quantity of interest is the centrality parameter
$\theta=\mu/\sigma$, the reference prior
is easily found to be
\hbox{$\pi_{\theta}(\theta,\sigma)=(1+\h\theta^2)^{-1/2}\sigma^{-1}$}
(\citealp{Ber1979}), which is not the earlier overall reference prior.
Finding a good overall prior by the reference distance situation when
this is added to the list of parameters of interest is considered
in Section \ref{sec.coeff-var}.

\end{exa}

\subsection{Reference distance solution}

When an exact solution is not possible, it is natural to consider a
family of candidate prior distributions,
$\cfF=\{\pi(\bm{\theta}\g\bfa), \bfa\in\cfA\}$, and choose, as
the overall
prior, the distribution from this class
which yields marginal posteriors that are closest, in an average sense,
to the marginal reference posteriors.

\subsubsection{Directed logarithmic divergence}

It is first necessary to decide how to measure the distance between
two distributions. We will actually use a divergence, not a distance,
namely the
directed logarithmic or Kullback-Leibler (KL) divergence \citep{KulLei1951}
given in the following definition.

\bdfn{} Let $p(\bm{\psi})$ be the probability density of a random vector
$\bm{\psi}\in\bfPsi$, and consider an approximation $p_0(\bm{\psi
})$ with the
same or larger support.
The {\em directed logarithmic divergence} of $p_0$ from $p$ is
\[
\kappa\{p_0\g p\} =\int_{\bfPsi}\; p(\bm{\psi})
\log\frac{p(\bm{\psi})}{p_0(\bm{\psi})}\,d\bm{\psi} \,,
\]
provided that the integral exists.
\label{defintrinsic}
\edfn
The non-negative directed logarithmic divergence $\kappa\{p_0\g p\} $
is the
expected log-density ratio of the true density over its approximation;
it is invariant under one-to-one transformations of the random vector
$\bm{\psi}$; and it has an operative interpretation as the amount of
information (in natural information units or \textit{nits}) which may be
expected to be required to recover $p$ from $p_0$.
It was first proposed by \citet{Ste1964} as a loss function and, in a
decision-theoretic context, it is often referred to as the {\em entropy loss}.

\subsubsection{Weighted logarithmic loss}
\label{sec.logloss}

Suppose the relative importance of the $\theta_i$ is given by a set of weights
$\{w_1,\ldots,w_m\}$, with $0<w_i<1$ and $\sum_i w_i=1$. A natural
default value for these is obviously \mbox{$w_i=1/m$}, but there are
many situations where this choice may not be appropriate; in Example
\ref{ex.multi-normal} for instance,
one might give $\theta$ considerably more weight than the means $\mu
_i$. To define the proposed criterion, we will also need to utilize
the reference prior predictives for $i=1, \ldots, m$,
\[
p_{\theta_i}(\bm{x})=\int_{\bfTheta}p(\bm{x}\g\bm{\theta}) \,
\pi
_{\theta
_i}(\bm{\theta})\,d\bm{\theta}\,.
\]

\bdfn{}
\label{def.jointloss}
The best overall prior $\pi^o(\bm{\theta})$ within the family
$\cfF=\{\pi(\bm{\theta}\g\bfa), \bfa\in\cfA\}$
is defined as that---assuming it exists and is unique---which
minimizes the {\em weighted average expected logarithmic loss}, so that
\begin{eqnarray*}
\pi^o(\bm{\theta})&=&\pi(\bm{\theta}\g\bfa^{*}), \quad\bfa
^{*}=\arg
\inf
_{\bfa\in\cfA} \,d(\bfa),\\
d(\bfa)&=&\sum_{i=1}^m w_i \int_{\cfX} \kappa\{
\pi_{\bm{\theta}_i}(\cdot\g\bm{x}, \bfa)\g\pi_{\bm{\theta
}_i}(\cdot\g
\bm{x})\}\,
p_{\theta_i}(\bm{x})\,d\bm{x},\quad\bfa\in\cfA\,.
\end{eqnarray*}
This can be rewritten, in terms of the sum of expected risks, as
\[
d(\bfa)=\sum_{i=1}^m w_i \int_{\bfTheta} \rho_i(\bfa\g\bm
{\theta}
)\,\pi
_{\theta_i}(\bm{\theta})\,d\bm{\theta},
\quad\bfa\in\cfA\,,
\]
where
\[
\rho_i(\bfa\g\bm{\theta})=
\int_{\cfX} \kappa\{\pi_{\bm{\theta}_i}(\cdot\g\bm{x}, \bfa
)\g
\pi_{\bm{\theta}_i}(\cdot\g\bm{x})\}\, p(\bm{x}\g\bm{\theta
})\,d\bm{x}
,\quad
\bm{\theta}\in\bfTheta.
\]
\edfn
Note that there is no assurance that $d(\bfa)$ will be finite if the
reference priors are improper. Indeed, in cases we have investigated
with improper reference priors, $d(\bfa)$ has failed to be finite and
hence the reference distance approach cannot be directly used.
However, as in the construction of reference priors, one can consider
an approximating sequence of proper priors
$\{\pi_{\theta_i}(\bm{\theta}\g k), k=1,2\ldots\}$ on increasing compact
sets. For each of the $\pi_{\theta_i}(\bm{\theta}\g k)$,
one can minimize the expected risk
\[
d(\bfa\g k)=\sum_{i=1}^m w_i \int_{\bfTheta} \rho_i(\bfa\g
\bm{\theta})\,\pi
_{\theta_i}(\bm{\theta}\g k)\,d\bm{\theta},
\]
obtaining $\bfa^{*}_k=\arg\inf_{\bfa\in\cfA} d(\bfa\g k)$. Then, if
$\bfa^{*}=\lim_{k\to\infty} \bfa^{*}_k$ exists, one can declare
this to
be the solution.

\subsubsection{Multinomial model}
\label{sub.multinomial}
In the multinomial model with $m$ cells and parameters $\{\theta
_1,\ldots,\theta_m\}$, with $\sum_{i=1}^m \theta_i=1$, the reference
posterior for each of the
$\theta_i$'s is $\pi_i(\theta_i\g\bm{x})=\Be(\theta_i\g x_i+\h,
n-x_i+\h)$,
while the marginal posterior distribution of $\theta_i$ resulting from
the joint prior
$\Di(\theta_1,\ldots, \theta_{m-1}\g a)$ is $\Be(\theta_i\g x_i+a,
n-x_i+(m-1)a)$.
The directed logarithmic discrepancy of the posterior $\Be(\theta_i\g
x_i+a, n-x_i+(m-1)a)$ from
the reference posterior $\Be(\theta_i\g x_i+\h, n-x_i+\h)$ is
\[
\kappa_i\{a\g\bm{x}, m, n\}=\kappa_i\{a\g x_i, m, n\}=
\kappa_{{\tiny\Be}}\{
x_i+a,n-x_i+(m-1)a
\g
x_i+\h, n-x_i+\h
\}
\]
where
\vskip-5mm
\begin{align}
& \kappa_{{\tiny\Be}}\{\alpha_0,\beta_0\g\alpha,\beta\}=
\int_0^1 \Be(\theta_i\g\alpha,\beta)
\log\Big[\frac{\Be(\theta_i\g\alpha,\beta)}{\Be(\theta_i\g
\alpha
_0,\beta_0)}\Big]\,d\theta_i \nonumber\\
&= \log\left[\frac{\Gamma(\alpha+\beta)}{\Gamma(\alpha_0+\beta
_0)}\;
\frac{\Gamma(\alpha_0)}{\Gamma(\alpha)}\;
\frac{\Gamma(\beta_0)}{\Gamma(\beta)}\right] \nonumber\\
&\hskip2mm+ (\alpha-\alpha_0)\psi(\alpha)+(\beta-\beta_0)\psi
(\beta)
-((\alpha+ \beta)-(\alpha_0+ \beta_0))\psi(\alpha+\beta),
\nonumber
\end{align}
and $\psi(\cdot)$ is the digamma function.

\begin{figure}[h!]
\includegraphics{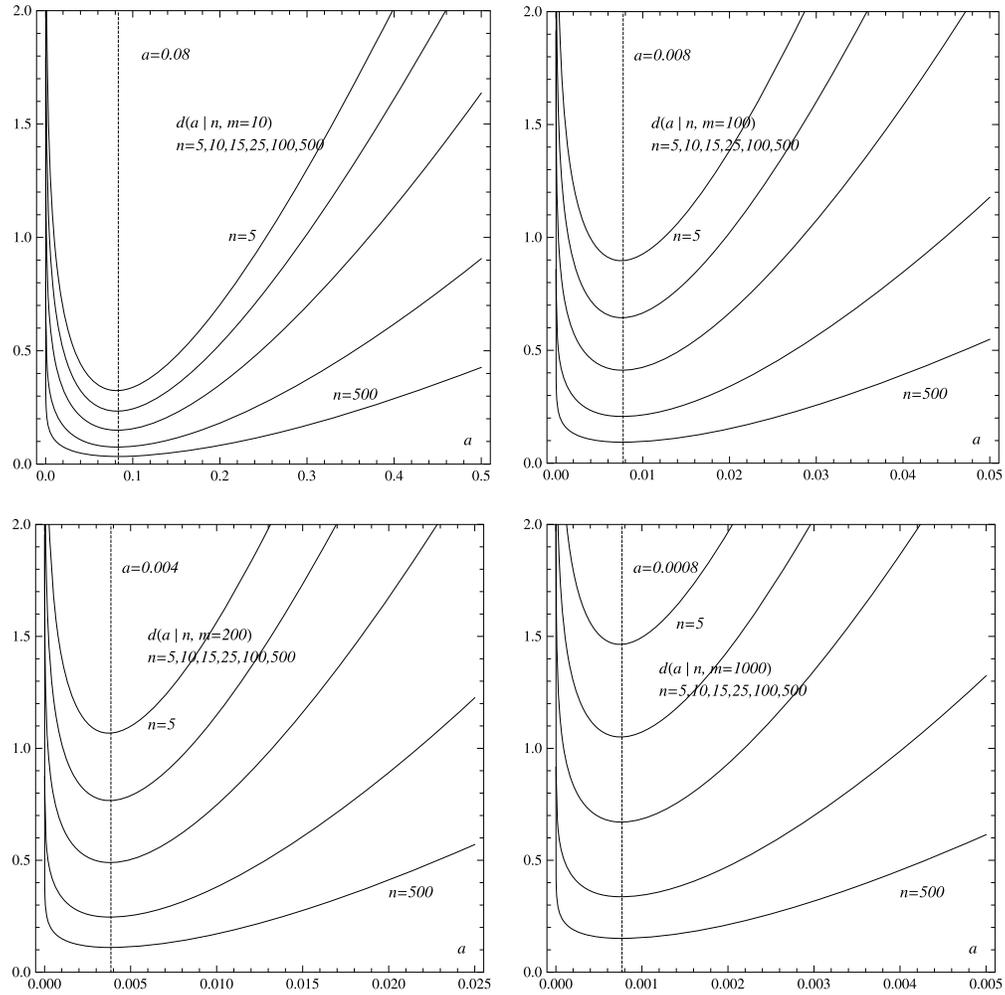}
\caption{Expected logarithmic losses, when using a Dirichlet prior
with parameter
$\{a,\ldots,a\}$, in a multinomial model with $m$ cells, for sample
sizes $n=5, 10, 25, 100$ and $500$.
The panels are for $m=10, 100,200$ and $1000$. In all cases, the
optimal value for all sample sizes is
$a^*\approx0.8/m$.}
\label{fig.multiloss}
\end{figure}

The divergence $\kappa_i\{a\g x_i, m, n\}$ between the two posteriors
of $\theta_i$ depends on the data only through $x_i$ and the sampling
distribution of $x_i$ is Binomial $\Bi(x_i\g n, \theta_i)$,
which only depends of $\theta_i$. Moreover, the marginal reference
prior for $\theta_i$ is
$\pi_{\theta_i}(\theta_i)= \Be(\theta_i\g1/2, 1/2)$ and, therefore,
the corresponding reference predictive for $x_i$
is
\[
p(x_i\g n)=\int_0^1 \Bi(x_i\g n,\theta_i)\,\Be(\theta_i\g1/2,
1/2)\,
d\theta_i
=\frac{1}{\pi}\frac{\Gamma(x_i+\h)\,\Gamma(n-x_i+\h)}{\Gamma
(x_i+1)\,
\Gamma(n-x_i+1)} \m.
\]
Hence, using Definition~\ref{def.jointloss} with uniform weights, the
average expected logarithmic loss of using a joint Dirichlet prior with
parameter $a$ with a sample of size $n$ is simply
\[
d(a\g m, n)=\sum_{x=0}^n \kappa\{a\g x, m, n\} \,p(x\g n)
\]
since, by the symmetry of the problem, the $m$ parameters $\{\theta
_1,\dots,\theta_m\}$ yield the same expected loss.

The function $d(a\g m=10, n)$ is graphed in the upper left panel of
Figure~\ref{fig.multiloss}
for several values of $n$. The expected loss decreases with $n$ and,
for any $n$, the function
$d(a\g m, n)$ is concave, with a unique minimum numerically found to be
at $a^*\approx=0.8/m=0.08$.
The approximation is rather precise. For instance, the minimum is
achieved at $0.083$ for $n=100$.

Similarly, the function $d(a\g m=1000, n)$ is graphed in the lower
right panel of
Figure~\ref{fig.multiloss}
for the same values of $n$ and with the same vertical scale, yielding
qualitatively similar results although, as one may expect, the expected
losses are now larger than those obtained with $m=10$. Once more, the function
$d(a\g m=1000, n)$ is concave, with a unique minimum numerically found
to be at $a^*\approx0.8/m=0.0008$,
with the exact value very close. For instance, for $n=100$, the minimum
is achieved at $0.00076$.

If can be concluded that, for all practical purposes when using the
reference distance approach, the best global Dirichlet prior, when one
is interested in all the parameters of a multinomial model, is that
with parameter
vector $\{1/m,\ldots,1/m\}$ (or $0.8\times\{1/m,\ldots,1/m\}$ to be
slightly more precise), yielding an approximate marginal reference
posterior for each of the $\theta_i$'s as
$\Be(\theta_i\g x_i+1/m, n-x_i+(m-1)/m)$, having mean and variance
\[
\E[\theta_i\g x_i, n]=\hat{\theta}_i=(x_i+1/m)/(n+1),\quad
\Var[\theta_i\g x_i, n]=\hat{\theta}_i(1-\hat{\theta}_i)/(n+2).
\]

\subsubsection{The normal model with coefficient of variation}
\label{sec.coeff-var}

Consider a random sample $\bfz=\{x_1,\ldots,x_n\}$ from a normal model
$\No(x\g\mu, \sigma)$, with both parameters unknown, and suppose that
one is interested in $\mu$ and $\sigma$, but also in the standardized mean
$\phi=\mu/\sigma$ (and/or any one-to-one function of them such as
$\log
\sigma$, or the coefficient of variation $\sigma/\mu$).

The joint reference prior when either $\mu$ or $\sigma$ are the
quantities of interest is
\begin{equation}
\label{eq.rhaar}
\pi_{\mu}(\mu,\sigma)=\pi_{\sigma}(\mu,\sigma)=\sigma^{-1}
\end{equation}
and this is known to lead to the Student and squared root Gamma
reference posteriors
\begin{equation*}
\label{eq.haar}
\pi^{ref}_{\mu}(\mu\g\bfz)=\St(\mu\g\mbox{$\overline
x$},s/\sqrt{n-1},n-1)\,
, \quad
\pi^{ref}_{\sigma}(\sigma\g\bfz)=\Ga^{-1/2}(\sigma\g(n-1)/2, n
s^2/2),\quad
\end{equation*}
with $n\mbox{$\overline x$}=\sum_{i=1}^nx_i$ and $n s^2=\sum_{i=1}^n
(x_i-\mbox{$\overline x$})^2$,
which are proper if $n\ge2$, and have the correct probability matching
properties.
However, the reference prior if $\phi$ is the parameter of interest is
$\pi_{\phi}(\phi,\sigma)=(2+\phi^2)^{-1/2}\sigma^{-1}$ (\citealp
{Ber1979}),
and the corresponding reference posterior distribution for $\phi$
can be shown to be
\[
\pi^{ref}_{\phi}(\phi\g\bfz)=\pi^{ref}_{\phi}(\phi\g t)\propto
(2+\phi
^2)^{-1/2}p(t\g\phi) \,,
\]
where $t=(\sum_{i=1}^n x_i)/(\sum_{i=1}^n x^2_i)^{1/2}$ has a sampling
distribution $p(t\g\phi)$ depending only on~$\phi$ (see \citealp
{StoDaw1972}). Note that all posteriors can be written in terms of the
sufficient statistics $\mbox{$\overline x$}$ and $s^2$ and the sample
size $n$, which we
will henceforth use.

A natural choice for the family of joint priors to be considered as
candidates for an overall prior is the class of {\em relatively invariant}
priors \citep{Har1964},
\[
\cfF=\{\pi(\mu,\sigma\g a)=\sigma^{-a},\, a>0\}
\]
which contains, for $a=1$, the joint reference prior~(\ref{eq.rhaar})
when either $\mu$ or
$\sigma$ are the parameters of interest, and the Jeffreys-rule prior,
for $a=2$. Since these priors are improper, a compact approximation
procedure, as described at the end of Section~\ref{sec.logloss}, is needed.
The usual compactification for location-scale parameters
considers the sets
\[
\cfC_k = \{ \mu\in(-k, k),\;\sigma\in(e^{-k}, e^k)\},\quad
k=1,2,\ldots.
\]
\par
One must therefore derive
\[
d(a\g n, k)=d_{\mu}(a\g n,k)+d_{\sigma}(a\g n,k)+d_{\phi}(a\g n,k),
\]
where each of the $d_i$'s is found by integrating the corresponding
risk with the appropriately renormalized joint reference prior. Thus,
\begin{eqnarray*}
d_{\mu}(a\g n, k)&=&\int_{\cfC_k} \left[\int_{\cfT} \kappa\{\pi
_{\mu
}(\cdot\g n, \bft,a)\g\pi^{ref}_{\mu}(\cdot\g n, \bft)\}\,
p(\bft\g n,\mu,\sigma)\,d\bft\right]\pi_{\mu}(\mu,\sigma\g
k)\,d\mu\,d\sigma,\\
d_{\sigma}(a\g n, k)&=&\int_{\cfC_k} \left[\int_{\cfT} \kappa\{
\pi
_{\sigma}(\cdot\g n, \bft,a)\g\pi^{ref}_{\sigma}(\cdot\g n, \bft
)\}\,
p(\bft\g n,\mu,\sigma)\,d\bft\right]\pi_{\sigma}(\mu,\sigma\g
k)\,d\mu
\,d\sigma,\\
d_{\phi}(a\g n, k)&=&\int_{\cfC_k} \left[\int_{\cfT} \kappa\{\pi
_{\phi
}(\cdot\g n, \bft,a)\g\pi^{ref}_{\phi}(\cdot\g n, \bft)\}\,
p(\bft\g n,\mu,\sigma)\,d\bft\right]\pi_{\phi}(\mu,\sigma\g
k)\,d\mu\,d\sigma,
\end{eqnarray*}
where $\bft=(\mbox{$\overline x$},s)$, and the $\pi_{i}(\mu,\sigma
\g k)$'s are the
joint {\em proper} prior reference densities of each of the parameter
functions obtained by truncation and renormalization in the $\cfC_k$'s.

It is found that the risk associated to $\mu$ (the expected KL
divergence of $\pi_{\mu}(\cdot\g n, \bft,a)$ from
$\pi^{ref}_{\mu}(\cdot\g n, \bft)$ under sampling) does {\em not }
depend on the parameters, so integration with the joint prior is not
required, and one obtains
\[
d_{\mu}(a\g n)=\log\left[\frac{\Gamma[n/2]\,\Gamma
[(a+n)/2-1]}{\Gamma
[(n-1)/2]\,\Gamma[(a+n-1)/2]}\right]
-\frac{a-1}{2}\left(\psi[\frac{n-1}{2}]-\psi[\frac{n}{2}]\right),
\]
where $\psi[\cdot]$ is the digamma function. This is a concave function
with a unique minimum $d_1(1\g n)=0$ at $a=1$, as one would expect from
the fact that the target family $\cfF$ contains the reference prior for
$\mu$ when $a=1$.
The function $d_{\mu}(a\g n=10)$ is the lower dotted line in
Figure~\ref
{fig.risks}.
Similarly, the risk associated to $\sigma$ does not depend either of
the parameters, and one obtains
\[
d_{\sigma}(a\g n,k)=d_{\sigma}(a\g n)=\log\left[\frac{\Gamma
[(a+n)/2-1]}{\Gamma[(n-1)/2]}\right]
-\frac{a-1}{2}\,\psi[\frac{n-1}{2}],
\]
another concave function with a unique minimum $d_2(1\g n)=0$, at $a=1$.
The function $d_{\sigma}(a\g n=10)$ is the upper dotted line in
Figure~\ref{fig.risks}.

\begin{figure}[t]
\includegraphics{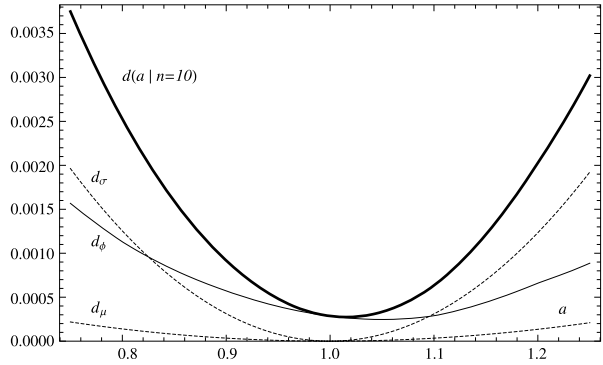}
\caption{Expected average intrinsic logarithmic losses $d(a\g n,k)$
associated with the use of the joint prior
\mbox{$\pi(\mu,\sigma\g a)=\sigma^{-a}$}
rather than the corresponding reference priors when $n=10$ and
$k=3$.}\label{fig.risks}
\end{figure}

\begin{figure}[h!]
\includegraphics{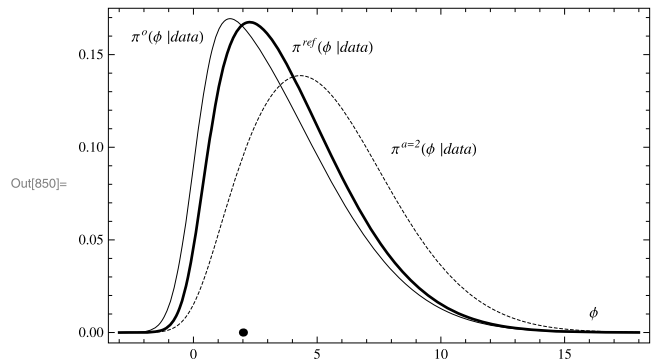}
\caption{Reference posterior (solid) and marginal overall posterior
(black) for $\phi$ given a minimal random sample of size $n=2$.
The dotted line is the marginal posterior for the prior with $a=2$,
which is the Jeffreys-rule prior.}
\label{fig.posts}
\end{figure}

The risk associated with $\phi$ cannot be analytically obtained and is
numerically computed, using one-dimensional numerical integration over
$\phi$ to obtain the KL divergence, and Monte Carlo sampling to obtain
its expected value with the truncated and renormalized reference prior
$\pi_{\phi}(\mu, \sigma\g k)$.
The function $d_{\phi}(a\g n=10,k=3)$ is represented by the black line
in Figure~\ref{fig.risks}.
It may be appreciated that, of the three components\vadjust{\eject}
of the expected
loss, the contribution corresponding to $\phi$ is the largest, and that
corresponding to $\mu$ is the smallest, in the neighborhood of the
optimal choice of $a$.
The sum of the three is the expected loss to be minimized, $d(a\g n,k)$.
The function $d(a\g n=10,k=3)$ is represented by the solid line in
Figure~\ref{fig.risks}, and has a minimum at $a^{*}_3=1.016$.
The sequence of numerically computed optimum values is $\{a^{*}_k\}=\{
1.139, 1.077, 1.016,\ldots\}$ quickly converging to some value
$a^{*}$ larger than $1$ and smaller than $1.016$, so that,
pragmatically, the overall objective prior may be taken to be the usual
objective prior for the normal model,
\[
\pi^{o}(\mu, \sigma)=\sigma^{-1}.
\]

It is of interest to study the difference in use of this overall prior
when compared with the \mbox{reference} prior for $\phi=\mu/\sigma$.
The difference is greater for smaller samples, and the minimum sample
size here is $n=2$. A random sample of two observations from $\No(x\g
1,\h)$ (so that the true value of the standardized mean is $\phi=2$)
was simulated yielding $\{x_1,x_2\}=\{0.959, 1.341\}$. The
corresponding reference posterior for $\phi$ is the solid line in
Figure~\ref{fig.posts}. The posterior that corresponds to the
recommended overall prior $a=1$ is the black line in the figure. For
comparison, the posterior corresponding to the prior with $a=2$, which
is Jeffreys-rule prior, is also given, as the dotted line. Thus, even
with a minimum sample size, the overall prior yields a marginal
posterior for $\phi$ which is quite close to that for the reference posterior.
(This was true for essentially all samples of size $n=2$ that we tried.)
For sample sizes beyond $n=4$ the differences are visually inappreciable.

\section{Hierarchical approach with hyperpriors}
\label{sec.hierarchical}

If a natural family of proper priors $\pi(\bm{\theta}\g a)$, indexed
by a
single parameter $a$, can be identified for a given problem, one can
compute the marginal likelihood $p(\bm{x}\g a)$ (necessarily a proper
density), and find the reference prior $\pi^R(a)$ for $a$ for this
marginal likelihood. This hierarchical prior specification is clearly
equivalent to use of
\[
\pi^o(\bm{\theta}) = \int\pi(\bm{\theta}\g a) \,\pi^R(a) \,da
\]
as the overall prior in the original problem.

\subsection{Multinomial problem}

\subsubsection{The hierarchical prior}
For the multinomial problem with the $\Di(\bm{\theta}\g a, \ldots,
a)$ prior,
the marginal density of any of the~$x_i$'s is
\[
p(x_i \g a, m, n) = \comb{n}{x_i} \frac{\Gamma(x_i+a)\,\Gamma(n-x_i+
(m-1)a)\,\Gamma(m\,a)}
{\Gamma(a)\,\Gamma((m-1)a)\,\Gamma(n+m\,a)} \,,
\]
following immediately from the fact that, marginally,
\[
p(x_i \g\theta_i )=\Bi(x_i \g n,\theta_i)\quad
\pi(\theta_i \g a)=\Be(\theta_i\g a,(m-1)a).
\]
Then
$\pi^R(a)$, the reference (Jeffreys) prior for the integrated model
$p(\bm{x}\g a)$ in~(\ref{eq.good}), is given in the following proposition:

\begin{teo}{}
\label{teo.hierarchical-reference-multinomial}
\begin{equation}
\label{eq.hierrefpr}
\pi^R(a\g m, n) \propto\left[\sum_{j=0}^{n-1}\left( \frac{Q(j \g
a,m,n)}{(a+j)^2} - \frac{m}{(m\,a+j)^2}\right) \right]^{1/2} \,,
\end{equation}
where $Q(\cdot\g a,m,n )$ is the right tail of the distribution of
$p(x \g a, m, n)$, namely
\[
Q(j \g a, m, n)= \sum_{l=j+1}^n p(l \g a, m, n), \quad j=0,\ldots,n-1.
\]
\end{teo}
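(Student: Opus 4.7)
The plan is to compute the Fisher information $I(a)$ of the integrated marginal model (\ref{eq.good}) directly from its log-likelihood and read off the Jeffreys--reference prior $\pi^R(a\g m,n)\propto\sqrt{I(a)}$. Taking logs and writing $\psi,\psi'$ for the digamma and trigamma functions, two differentiations in $a$ give
\[
\partial_a^2\log p(\bm{x}\g a)=m^2\psi'(ma)-m\psi'(a)+\sum_{i=1}^m\psi'(x_i+a)-m^2\psi'(n+ma),
\]
so $I(a)=-\E_{\bm{x}\g a}[\partial_a^2\log p(\bm{x}\g a)]$ splits into deterministic trigamma terms plus the data-dependent piece $\sum_i\E[\psi'(x_i+a)]$.

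Both blocks are reduced to finite sums via the trigamma recurrence $\psi'(z+k)=\psi'(z)-\sum_{j=0}^{k-1}(z+j)^{-2}$, valid for any integer $k\ge0$. Applied with $(z,k)=(ma,n)$ this handles the deterministic block:
\[
m^2\bigl[\psi'(n+ma)-\psi'(ma)\bigr]=-\,m^2\sum_{j=0}^{n-1}\frac{1}{(ma+j)^2}.
\]
Applied with $(z,k)=(a,x_i)$, followed by a Fubini swap of the finite sum with the expectation, it handles the random block once one observes that the marginal of each $x_i$ under (\ref{eq.good}) is the beta--binomial with parameters $(n,a,(m-1)a)$, whose probability mass function is precisely the $p(x\g a,m,n)$ displayed in the statement; hence $\Pr(x_i>j)=Q(j\g a,m,n)$ and
\[
\E[\psi'(x_i+a)]=\psi'(a)-\sum_{j=0}^{n-1}\frac{Q(j\g a,m,n)}{(a+j)^2}.
\]
By symmetry all $m$ such expectations contribute equally.

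Putting the two blocks together, the $\pm m\psi'(a)$ terms cancel and what remains is
\[
I(a)=m\sum_{j=0}^{n-1}\left[\frac{Q(j\g a,m,n)}{(a+j)^2}-\frac{m}{(ma+j)^2}\right],
\]
whose positive square root is exactly (\ref{eq.hierrefpr}), the overall factor $\sqrt{m}$ being absorbed into the proportionality constant. The substantive obstacle is really just bookkeeping --- tracking the chain-rule factors $1$, $m-1$, $m$ in front of each digamma --- together with the one modelling observation that reduces the joint Dirichlet--multinomial to the one-dimensional beta--binomial defining $Q$. The Fubini step is automatic because every sum in sight is finite, and nonnegativity of the bracketed sum is guaranteed a posteriori by $I(a)\ge0$.
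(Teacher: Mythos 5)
Your proposal is correct and follows essentially the same route as the paper's proof: the paper's terse ``computation yields'' step is exactly your trigamma-recurrence reduction to equation (\ref{eq.proof}), and the paper's ``exchangeability plus rearranging terms'' is your interchange of the finite sum with the expectation to produce $\Pr(x_i>j)=Q(j\g a,m,n)$. You have simply filled in the details the authors left implicit.
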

\begin{proof}
Computation yields that
\begin{equation}
\label{eq.proof}
\E\left[ - \frac{d^2}{da^2} \log p(\bm{x}\g a) \right]= -\sum
_{j=0}^{n-1}\frac{m^2}{(m\,a+j)^2} + E\left[\sum_{i=1}^m \sum
_{j=0}^{x_i-1}\frac{1}{(a+j)^2}\right] \,,
\end{equation}
where $\sum_{j=0}^{-1} \equiv0$. Since the $x_i$ are exchangeable,
this equals
\[
-\sum_{j=0}^{n-1}\frac{m^2}{(m\,a+j)^2} + m E^{X_1}\left[\sum
_{j=0}^{X_1-1}\frac{1}{(a+j)^2}\right] \,,
\]
and the result follows by rearranging terms.
\end{proof}

\begin{teo}{}
\label{teo.hierarchical-proper}
$\pi^R(a)$ is a proper prior.
\end{teo}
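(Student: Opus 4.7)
The plan is to establish that $\int_0^\infty \pi^R(a\g m,n)\,da<\infty$. Since $Q(j\g a,m,n)$ is a finite sum of the continuous functions $p(l\g a,m,n)$ and each summand in the bracket of (\ref{eq.hierrefpr}) is continuous on $(0,\infty)$, $\pi^R$ is continuous on $(0,\infty)$, so integrability on any compact subinterval is automatic. The problem thus reduces to analyzing the two tails $a\to 0^+$ and $a\to\infty$, and showing the integrand is $L^1$ near each endpoint.

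For the tail $a\to\infty$, the idea is that the Dirichlet $\Di(\bm{\theta}\g a,\ldots,a)$ prior concentrates at the symmetric interior point $(1/m,\ldots,1/m)$, so $p(\bm{x}\g a)\to p_{\infty}(\bm{x})=\binom{n}{x_1\cdots x_m}m^{-n}$, which is strictly positive. Using the elementary identities $\Gamma(x_i+a)/\Gamma(a)=a(a+1)\cdots(a+x_i-1)$ and $\Gamma(ma)/\Gamma(n+ma)=1/[(ma)(ma+1)\cdots(ma+n-1)]$, one obtains an asymptotic expansion $p(\bm{x}\g a)=p_{\infty}(\bm{x})\bigl[1+D(\bm{x})/a+O(1/a^2)\bigr]$ with error uniform in $\bm{x}$. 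Consequently $\partial_a\log p(\bm{x}\g a)=O(1/a^2)$ and the Fisher information $I(a)=\E[(\partial_a\log p)^2]=O(1/a^4)$, giving $\pi^R(a)=O(1/a^2)$ as $a\to\infty$, which is integrable on $[1,\infty)$.

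For the tail $a\to 0^+$, I would expand the Gamma functions using $\Gamma(\epsilon)=1/\epsilon-\gamma+O(\epsilon)$ and $\Gamma(n+\epsilon)=\Gamma(n)[1+\epsilon\,\psi(n)+O(\epsilon^2)]$ in the marginal $p(x_i\g a,m,n)$. A direct calculation gives $p(0\g a,m,n)=(m-1)/m - a(m-1)[\psi(n)+\gamma]/m + O(a^2)$ and $p(l\g a,m,n)=O(a)$ for $0<l<n$, so $Q(j\g a,m,n)\to 1/m$ as $a\to 0^+$ for each $j=0,\ldots,n-1$. For $j\ge 1$ the contribution $Q(j)/(a+j)^2-m/(ma+j)^2$ is bounded. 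The only potentially singular term is $j=0$: here the $1/(ma^2)$ parts of $Q(0)/a^2$ and of $m/(ma)^2$ cancel exactly, and the next-order expansion of $Q(0)$ leaves a contribution of order $(m-1)[\psi(n)+\gamma]/(ma)$. Therefore $I(a)=O(1/a)$ and $\pi^R(a)=O(1/\sqrt{a})$ as $a\to 0^+$, which is integrable on $(0,1]$. (The degenerate case $n=1$ can be treated separately since $\psi(1)+\gamma=0$; but with $n=1$ the sum in (\ref{eq.hierrefpr}) has only the $j=0$ term and a higher-order expansion still yields integrability.)

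The main obstacle I anticipate is the $a\to 0^+$ analysis. The leading $1/(ma^2)$ singularities in $Q(0)/a^2$ and $m/(ma)^2$ must cancel \emph{exactly} at order $1/a^2$, and the whole properness claim hinges on the surviving $1/a$ term being both positive (so that $I(a)\ge 0$, as it must be, since it is a Fisher information) and only a $1/a$ divergence (giving the integrable rate $1/\sqrt{a}$ rather than, say, $1/a$). This requires a careful second-order Gamma-function expansion and a consistency check via the nonnegativity of $I(a)$; the $a\to\infty$ tail is routine by comparison once the polynomial representation of the Gamma ratios above is used.
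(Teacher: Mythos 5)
Your proposal is correct, and its skeleton --- continuity of $\pi^R$ plus separate integrability estimates at $0$ and at $\infty$ --- is exactly the paper's. The $a\to 0^+$ half is essentially identical to the paper's: they too expand $p(0\g a,m,n)=\frac{m-1}{m}(1-c_n a+O(a^2))$ with $c_n=\sum_{j=1}^{n-1}1/j=\psi(n)+\gamma$, observe the exact cancellation of the $1/(m a^2)$ singularities in the $j=0$ summand, bound the $j\ge 1$ terms by $O(1)$, and conclude $\pi^R(a)=\sqrt{(m-1)c_n/(ma)}+O(\sqrt a)$; your aside on $n=1$ (where $c_n=0$) covers a degenerate case the paper silently ignores --- there the marginal model does not depend on $a$ at all. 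Where you genuinely diverge is the $a\to\infty$ tail. The paper applies Stirling's approximation to the marginal to get $p(x_1\g a,m,n)=\Bi(x_1\g n,1/m)(1+O(a^{-1}))$, substitutes termwise into the sum in (\ref{eq.hierrefpr}), and uses $\sum_{j=0}^{n-1}\sum_{l>j}\Bi(l\g n,1/m)=n/m$ to cancel the $a^{-2}$ terms, leaving $O(a^{-3})$ inside the bracket and hence $\pi^R(a)=O(a^{-3/2})$. You instead work with the score of the joint likelihood, $\partial_a\log p(\bm{x}\g a)=\sum_i\sum_{j=0}^{x_i-1}(a+j)^{-1}-\sum_{j=0}^{n-1}m(ma+j)^{-1}=O(a^{-2})$ (the leading $n/a$ terms cancel), giving $I(a)=\E[(\partial_a\log p)^2]=O(a^{-4})$ and $\pi^R(a)=O(a^{-2})$. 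Both bounds suffice for integrability, but yours is sharper and is in fact the true rate: the paper itself asserts, in the sentence immediately following its proof, that $\pi^R$ ``behaves as $O(a^{-2})$ for large $a$,'' a rate its own proof does not deliver. Your route also replaces the ``laborious'' Stirling computation with the elementary polynomial form of the Gamma ratios; the only point worth making explicit is that uniformity of the $O(a^{-2})$ error in $\bm{x}$ is automatic here, since the sample space is finite for fixed $n$ and $m$.
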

\begin{proof}
The prior is clearly continuous in $a$, so we only need show that it is
integrable
at 0 and at~$\infty$. Consider first the situation as $a \rightarrow
\infty$. Then
\begin{eqnarray*}
p(0 \g a, m, n) &=& \frac{\Gamma(a)\Gamma(n+ [m-1]a)\Gamma(m\,a)}
{\Gamma(a)\Gamma([m-1]a)\Gamma(n+m\,a)} \\
&=& \frac{(m-1)a [(m-1)a+1] \cdots[(m-1)a+n-1]}{m\,a (m\,a+1) \cdots
(m\,a+n-1)} \\
&=& \frac{(m-1)}{m}(1-c_n a +O(a^2)) \,,
\end{eqnarray*}
where $c_n = 1 +1/2 + \cdots+1/(n-1)$. Thus the first term of the sum
in (\ref{eq.hierrefpr}) is
\[
\frac{1-p(0 \g a, m, n)}{a^2} - \frac{1}{m\,a^2} = \frac
{(m-1)c_n}{m\,
a} +O(1) \,.
\]
All of the other terms of the sum in (\ref{eq.hierrefpr}) are clearly
$O(1)$, so that
\[
\pi^R(a) = \frac{\sqrt{(m-1)c_n/m}}{\sqrt{a}} +O(\sqrt{a}) \,,
\]
as $a \rightarrow0$, which is integrable at zero (although unbounded).

To study propriety as $a \rightarrow\infty$, a laborious application
of Stirling's approximation yields
\[
p(x_1 \g a, m, n) = \mbox{Bi}(x_1 \g n, 1/m)(1+O(a^{-1})) \,,
\]
as $a \rightarrow\infty$. Thus
\begin{eqnarray*} \pi^R(a, m, n) &=& \left[\sum_{j=0}^{n-1} \left
(\frac
{\sum_{l=j+1}^{n}\mbox{Bi}(l \g n, 1/m)}{a^2} -\frac{1}{m\,
a^2}\right)
+O(a^{-3})\right]^{1/2} \\
&=& \left[ \left(\frac{\sum_{l=1}^{n} l\mbox{Bi}(l \g n, 1/m)}{a^2}
-\frac{n}{m\,a^2}\right) +O(a^{-3})\right]^{1/2}
= O(a^{-3/2}) \,,
\end{eqnarray*}
which is integrable at infinity, completing the proof.
\end{proof}
As suggested by the proof above, the reference prior
$\pi^R(a\g m, n)$ behaves as $O(a^{-1/2})$ near $a=0$ and behaves as
$O(a^{-2})$ for large $a$ values. Using series expansions, it is found
that, for sparse tables where $m/n$ is relatively large, the reference
prior is well approximated by the proper prior
\begin{eqnarray}
\label{eq.aproxprior}
\pi^*(a\g m, n)= \frac12\, \frac{n}{m}\,a^{-1/2}\Big(a+\frac
{n}{m}\Big)^{-3/2},
\end{eqnarray}
which only depends on the ratio $m/n$, and has the behavior at the
extremes described above.
This can be restated as saying that $\phi(a)=a/(a+(n/m))$ has a Beta
distribution $\Be(\phi\g\h,1)$.
Figure~\ref{fig.priorcont} gives the exact form of $\pi^R(a\g m, n)$
for various $(m,n)$ values, and the corresponding approximation given
by~(\ref{eq.aproxprior}). The approximate reference prior $\pi^*(a\g m,
n)$ appears to be a good approximation to the actual reference prior, and
hence can be recommended for use with large sparse contingency tables.

\begin{figure}[h!]
\includegraphics{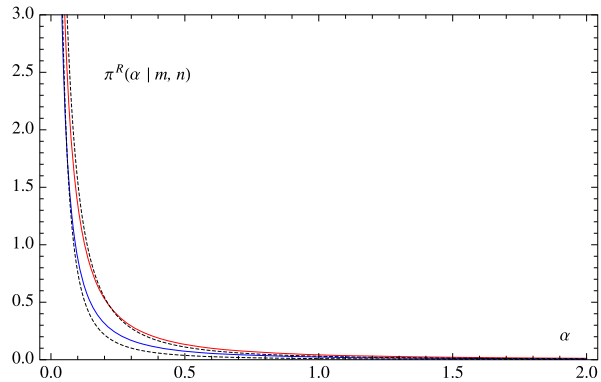}
\caption{Reference priors $\pi^R(a\g m, n)$ (solid lines) and their
approximations (dotted lines) for
$(m=150, n=10)$ (upper curve) and for $(m=500, n=10)$ (lower curve).}
\label{fig.priorcont}
\end{figure}

It is always a surprise when a reference prior turns out to be proper,
and this seems to happen when the likelihood does not go to zero at a
limit. Indeed, it is straightforward to show that
\begin{eqnarray*}
\label{eq.likebehav} p(\bm{x}\g a) = \left\{
\begin{array}{ll}
O(a^{r_0-1}), & \mbox{as $a \rightarrow0,$} \\[6pt]
{n \choose\bm{x}} \;m^{-n}, & \mbox{as $a \rightarrow\infty,$}
\end{array}
\right.
\end{eqnarray*}
where $r_0$ is the number of nonzero $x_i$.
Thus, indeed, the likelihood is constant at $\infty$, so that the prior
must be proper at
infinity for the posterior to exist.

\subsubsection{Computation with the hierarchical reference prior}
If a full Bayesian analysis is desired, the obvious MCMC sampler is as follows:
\par
{\em Step 1}. Use a Metropolis Hastings move to sample from the
marginal posterior
\newline\indent$\pi^R(a\g\bm{x}) \propto\pi^R(a)\, p(\bm{x}\g a)$.
\par
{\em Step 2}. Given $a$, sample from the usual beta posterior $\pi
(\theta\g a, \bm{x})$.
\par
This will be highly efficient if a good proposal distribution for Step
1 can be found. As it is only a one-dimensional
distribution, standard techniques should work well. Even simpler
computationally is the
use of the approximate reference prior $\pi^*(a\g m,n)$ in (\ref
{eq.aproxprior}), because
of the following result.
\begin{teo}{}
Under the approximate reference prior (\ref{eq.aproxprior}), and
provided there are at least three nonempty cells,
the marginal posterior distribution of $a$ is log-concave.
\end{teo}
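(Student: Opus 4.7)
My plan is to establish $(\log\pi^*(a\g\bm x))''\le 0$ by direct computation of the second derivative of the log-posterior. Since $\log\pi^*(a\g\bm x)=\log\pi^*(a\g m,n)+\log p(\bm x\g a)+\text{const}$, I would differentiate each piece separately. The prior contribution is elementary: from $\log\pi^*(a\g m,n)=\text{const}-\tfrac12\log a-\tfrac32\log(a+n/m)$ one has
\[
(\log\pi^*)''(a)=\frac{1}{2a^2}+\frac{3}{2(a+n/m)^2},
\]
which is positive, so the log-likelihood has to be sufficiently concave to overpower it. For the likelihood, I would reuse the Fisher-information calculation already carried out in the proof of Proposition~\ref{teo.hierarchical-reference-multinomial}: equation~(\ref{eq.proof}) was stated in expectation, but its right-hand side depends on $\bm x$ only through the linear functional $\sum_i\sum_{j=0}^{x_i-1}(a+j)^{-2}$, so the identity holds pointwise in $\bm x$:
\[
(\log p(\bm x\g a))''(a)=\sum_{k=0}^{n-1}\frac{m^2}{(ma+k)^2}-\sum_{i=1}^m\sum_{k=0}^{x_i-1}\frac{1}{(a+k)^2}.
\]

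Adding the two pieces gives the second derivative of the log-posterior. To bring the hypothesis $r:=|\{i:x_i>0\}|\ge 3$ into play, I would rewrite the subtracted sum by swapping the order of summation. Setting $r_j:=|\{i:x_i>j\}|$, one has $r_0=r$, $\sum_j r_j=n$, and the subtracted sum becomes $\sum_{j\ge 0}r_j/(a+j)^2$. Peeling off the $j=0$ and $k=0$ contributions produces
\[
(\log\pi^* p)''(a)=\frac{\tfrac32-r}{a^2}+\frac{3}{2(a+n/m)^2}+\sum_{k=1}^{n-1}\frac{m^2}{(ma+k)^2}-\sum_{j\ge 1}\frac{r_j}{(a+j)^2}.
\]
The hypothesis $r\ge 3$ gives $(\tfrac32-r)/a^2\le -\tfrac{3}{2a^2}$, which can absorb the residual prior term $\tfrac{3}{2(a+n/m)^2}\le\tfrac{3}{2a^2}$, reducing the problem to showing that the residual likelihood piece $\sum_{k=1}^{n-1}m^2/(ma+k)^2-\sum_{j\ge 1}r_j/(a+j)^2$ is non-positive.

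The heart of the proof—and the main obstacle—is this residual inequality. Writing $m^2/(ma+k)^2=1/(a+k/m)^2$, the left-hand side consists of $n-1$ terms at the fractional positions $k/m\in\{1/m,\ldots,(n-1)/m\}$, while the right-hand side places total weight $\sum_{j\ge 1}r_j=n-r$ at integer positions $j\ge 1$ with multiplicities $r_j$. The natural attack is the telescoping bound $1/(a+k/m)^2\le m[1/(a+(k-1)/m)-1/(a+k/m)]$, which collapses the left-hand side to $m/a-m/(a+(n-1)/m)$; one must then lower-bound the right-hand side using only the constraints $\sum_{j\ge 1}r_j=n-r$ and $r\ge 3$. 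The critical configurations are those with $r$ near the boundary value of~$3$ and nearly all the mass piled into a single cell, so that $r_j$ is nonzero for large $j$ while $r_1,r_2,\ldots$ are small and the right-hand side is smallest. I expect the quantitative bookkeeping needed to close this boundary case—rather than any conceptual difficulty—to absorb the bulk of the effort, possibly requiring one to retain more refined information about the approximate prior (such as using $\tfrac{3}{2(a+n/m)^2}\le\tfrac{3}{2a(a+n/m)}$ and telescoping this too) rather than the crude bound $\le \tfrac{3}{2a^2}$ employed above.
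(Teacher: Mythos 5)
Your computation is set up correctly and, through the first display, coincides with the paper's own proof: the identity (\ref{eq.proof}) does hold pointwise in $\bm{x}$, the prior contributes $\tfrac{1}{2a^2}+\tfrac{3}{2(a+n/m)^2}$, and collecting the $j=0$ and $k=0$ terms gives $(\tfrac32-r_0)/a^2$, which for $r_0\ge 3$ absorbs the crude bound $\tfrac{3}{2(a+n/m)^2}\le\tfrac{3}{2a^2}$. Where you and the paper diverge is instructive: the paper simply discards the positive term $\sum_{j=0}^{n-1}m^2/(ma+j)^2$ while retaining only the $i=2,3$ portion of the negative sum, which would require $\sum_{j=0}^{n-1}m^2/(ma+j)^2\le \sum_{i\neq 2,3}\sum_{j=0}^{x_i-1}(a+j)^{-2}$ --- false, since the left side has $n$ terms each exceeding $(a+n/m)^{-2}$ and the right side only $n-x_2-x_3$ terms each at most $a^{-2}$. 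You instead correctly recognize that this positive term must be controlled and reduce the problem to the residual inequality $\sum_{k=1}^{n-1}m^2/(ma+k)^2\le\sum_{j\ge1}r_j/(a+j)^2$.

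That residual inequality is the genuine gap, and no bookkeeping will close it: take $x_1=x_2=x_3=1$ with all other cells empty, so $r_j=0$ for $j\ge1$ and the right-hand side vanishes while the left-hand side is strictly positive. Indeed the statement itself fails on $(0,\infty)$. Since $p(\bm{x}\g a)\to \binom{n}{\bm{x}}m^{-n}>0$ as $a\to\infty$ while $\pi^*(a\g m,n)\sim \tfrac{n}{2m}a^{-2}$, the marginal posterior has an exact $a^{-2}$ right tail, whence
\[
\frac{d^2}{da^2}\log\bigl[p(\bm{x}\g a)\,\pi^*(a\g m,n)\bigr]=\frac{2}{a^2}+O(a^{-3})>0
\]
for $a$ large, \emph{regardless of the data} (e.g., for $m=n=3$, $x_1=x_2=x_3=1$, the second derivative is already positive at $a=5$). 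Any density with a polynomial tail is log-convex there, so log-concavity could at best hold on a bounded interval or after a reparameterization with lighter tails. In short: your proof strategy is sound and more careful than the paper's, but the obstacle you flagged as ``quantitative bookkeeping'' is in fact a counterexample to the claim, and the paper's own proof conceals the same error by silently dropping the offending positive term.
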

\begin{proof}
It follows from (\ref{eq.proof}) that
\[
\frac{d^2}{da^2} \log[p(\bm{x}\g a) \pi^*(a\g m,n)]=
\sum_{j=0}^{n-1}\frac{m^2}{(m\,a+j)^2}
- \sum_{i=1}^m \sum_{j=0}^{x_i-1}\frac{1}{(a+j)^2}
+ \frac{1}{2a^2} + \frac{3}{2(a+n/m)^2}\m.
\]
Without loss of generality, we assume that $x_i>0,$ for $i=1,2,3$. Then
\[
\frac{d^2}{da^2} \log[p(\bm{x}\g a) p^*(a\g m,n)] <
- \sum_{i=2}^3 \sum_{j=0}^{x_i-1}\frac{1}{(a+j)^2}
+ \frac{1}{2a^2} + \frac{3}{2a^2} < 0.
\]
\vskip-8mm
\end{proof}

Thus adaptive rejection sampling \citep{GilWil1992} can be used to
sample from
the posterior of $a$.

Alternatively, one might consider the empirical Bayes solution of
fixing $a$ at its posterior mode $\widehat{a}^R$. The one caveat is
that, when $r_0=1$, it follows from (\ref{eq.likebehav}) that the
likelihood is constant at zero, while $\pi^R(a)$ is unbounded at zero;
hence the posterior mode will be $a=0$, which cannot be used. When $r_0
\geq2$, it is easy to see that $\pi^R(a) p(\bm{x}\g a)$ goes to zero
as $a \rightarrow0$, so there will be no problem.

It will typically be considerably better to utilize the posterior mode
than the maximum of $p(\bm{x}\g a)$ alone, given the fact that the
likelihood does not go to zero at $\infty$. For instance, if all
$x_i=1$, it can be shown that $p(\bm{x}\g a)$ has a likelihood
increasing in $a$, so that there is no mode. (Even when $r_0=1$, use of
the mode of $p(\bm{x}\g a)$ is not superior, in that the likelihood is
also maximized at 0 in that case.)

\subsubsection{\bf Posterior behavior as $m \rightarrow\infty$}
\label{sec.post-behavior}
Since we are contemplating the ``large sparse" contingency table
scenario, it is of considerable interest to study the behavior of the
posterior distribution as $m \rightarrow\infty$. It is easiest to
state the result in terms of the transformed variable $v=m\,a$.
Let $\pi^R_m(v \g\bm{x})$ denote the transformed reference posterior.

\begin{teo}
\begin{equation}
\label{eq.large-m-post}
\Psi(v) = \lim_{m \rightarrow\infty} \pi^R_m(v \g{\bm{x}}) =
\frac
{\Gamma(v+1)}{\Gamma(v+n)} \; v^{(r_0-\frac{3}{2})}\; \left[\sum
_{i=1}^{n-1} \frac{i}{(v+i)^2}\right]^{1/2} \,.
\end{equation}
\end{teo}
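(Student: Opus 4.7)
The plan is to change variables to $v=ma$, obtain the pointwise large-$m$ asymptotics of the unnormalized posterior, and then normalize. With Jacobian $da/dv = 1/m$, we have
\[
\pi^R_m(v\mid\bm{x}) = N_m^{-1}\,p(\bm{x}\mid v/m)\,\pi^R(v/m\mid m,n),
\]
so it suffices to determine the asymptotics of the likelihood and of the prior density, up to an $m$-dependent prefactor that is absorbed into $N_m$.

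For the likelihood, I would write $p(\bm{x}\mid a)=\binom{n}{\bm{x}}\,[\Gamma(v)/\Gamma(v+n)]\,\prod_{i:x_i\ge 1}a(a+1)\cdots(a+x_i-1)$, using $\Gamma(x_i+a)=\Gamma(a)(a)_{x_i}$ for $x_i\ge 1$ and $\Gamma(x_i+a)=\Gamma(a)$ for $x_i=0$. Setting $a=v/m$, the product factorizes as $(v/m)^{r_0}\prod_{i:x_i\ge 1}(x_i-1)!\,(1+o(1))$, giving
\[
p(\bm{x}\mid v/m)\sim C_{\bm{x}}\,m^{-r_0}\,v^{r_0-1}\,\frac{\Gamma(v+1)}{\Gamma(v+n)},
\]
for a constant $C_{\bm{x}}$ independent of $v$ and $m$.

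The delicate step, and the main obstacle, is the asymptotics of $\pi^R(v/m\mid m,n)$: the sum inside the square root in \eqref{eq.hierrefpr} has pieces of order $m$ that must cancel to yield a finite normalizable limit. The key ingredient is a sharp expansion of $P(X_1=0)=\prod_{j=0}^{n-1}(v+j-v/m)/(v+j)=1-(v/m)\sum_{k=0}^{n-1}1/(v+k)+O(m^{-2})$, which gives $Q(0\mid v/m,m,n)=(v/m)\sum_{k=0}^{n-1}1/(v+k)+O(m^{-2})$, while for $j\ge 1$ one checks $Q(j)=O(m^{-1})$. Substituting, the $j=0$ term yields $Q(0)/(v/m)^2 - m/v^2 = (m/v)\sum_{k=1}^{n-1}1/(v+k)+O(1)$ — the leading $m/v^2$ piece of $Q(0)/(v/m)^2$ exactly cancels the $-m/v^2$ subtraction. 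For each $j\ge 1$, $Q(j)/(v/m+j)^2=o(1)$ while the subtraction contributes $-m/(v+j)^2$. Combining,
\[
\sum_{j=0}^{n-1}\!\left[\frac{Q(j)}{(v/m+j)^2}-\frac{m}{(v+j)^2}\right] = m\sum_{j=1}^{n-1}\!\left(\frac{1}{v(v+j)}-\frac{1}{(v+j)^2}\right)+O(1)=\frac{m}{v}\sum_{j=1}^{n-1}\frac{j}{(v+j)^2}+O(1),
\]
so $\pi^R(v/m\mid m,n)\sim\sqrt{m/v}\,\bigl[\sum_{i=1}^{n-1}i/(v+i)^2\bigr]^{1/2}$.

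Combining everything including the Jacobian,
\[
m^{-1}\,p(\bm{x}\mid v/m)\,\pi^R(v/m\mid m,n)\;\sim\;C_{\bm{x}}\,m^{-r_0-1/2}\,v^{r_0-3/2}\,\frac{\Gamma(v+1)}{\Gamma(v+n)}\,\sqrt{\sum_{i=1}^{n-1}\frac{i}{(v+i)^2}}.
\]
To finish I would check that the right-hand side (stripped of the $m^{-r_0-1/2}$ factor) is integrable on $(0,\infty)$: near the origin it behaves as $v^{r_0-3/2}$ and at infinity as $v^{r_0-n-3/2}$, both integrable whenever $1\le r_0\le n$. A dominated-convergence argument, with a uniform envelope obtained by strengthening the above expansions on compact subsets of $(0,\infty)$ and by a tail bound based on the $O(a^{-3/2})$ decay established in the proof of Proposition \ref{teo.hierarchical-proper}, justifies swapping the limit with normalization, producing exactly $\Psi(v)$.
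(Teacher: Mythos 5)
Your proposal is correct and follows essentially the same route as the paper's proof: change variables to $v=ma$, factor the likelihood so that the $v$-dependence appears as $v^{r_0}\Gamma(v)/\Gamma(v+n)$, expand $Q(0\g v/m,m,n)$ to first order in $1/m$ so that the leading $m/v^2$ piece cancels the subtracted term, and reduce the surviving sum via $\frac{1}{v(v+j)}-\frac{1}{(v+j)^2}=\frac{j}{v(v+j)^2}$. The only substantive addition is your closing integrability and dominated-convergence argument for normalizing the limit, a point the paper leaves implicit.
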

\begin{proof}
Note that
\begin{eqnarray*}
\pi^R(a \g{\bm{x}}) &\propto& m({\bm{x}} \g a)\, \pi^R( a) \\
&\propto& \frac{\Gamma( m\,a)}{\Gamma(m\,a +n)} \left[ \prod_{i=1}^m
\frac{\Gamma(a + x_i)}{\Gamma(a)}\right]\pi^R( a) \\
&\propto& \frac{\Gamma(m\,a)}{\Gamma(m\,a +n)} \left[ \prod_{i: x_i
\neq0} a (a +1) \;s (a +x_i -1) \right] \pi^R( a) \\
&\propto& \frac{\Gamma(m\,a)}{\Gamma(m\,a +n)} \left[ \prod
_{j=0}^{n-1} (a +j)^{r_j} \right] \pi^R( a)
\,,
\end{eqnarray*}
where $r_j = \{ \# x_i > j\}$.
Change of variables to $v=m\,a$ yields
\begin{eqnarray}
\pi^R_m(v \g{\bm{x}}) &\propto& \frac{\Gamma(v)}{\Gamma(v +n)}
\left[
\prod_{j=0}^{n-1} \left(\frac{v}{m} +j\right)^{r_j} \right] \pi
^R\left
(\frac{v}{m}\right) \nonumber\\
&\propto& \frac{\Gamma(v) \ v^{r_0}}{\Gamma(v +n)} \left[ C + \sum
_{i=1}^{n-r_0} K_i \left(\frac{v}{m}\right)^{i} \right] \pi^R\left
(\frac
{v}{m}\right)
\,,
\label{eq.large-m}
\end{eqnarray}
where $C= \prod_{j=2}^{n-1} j^{r_j}$ and the $K_i$ are constants.

Next we study the behavior of $\pi^R(v/m)$ for large $m$. Note first
that, in terms of $v$, the marginal density
of $x_1 =0$ is
\begin{eqnarray*}
p(0 \g v) &=& \frac{\Gamma(\frac{(m-1)}{m}v+n)}{\Gamma(\frac
{(m-1)}{m}v)} \; \frac{\Gamma(v)}{\Gamma(v +n)} \\
&=& \frac{\frac{(m-1)}{m}v[\frac{(m-1)}{m}v+1] \cdots[\frac{(m-1)}{m}v
+n-1]} {v(v+1) \cdots(v+n-1)} \\
&=& \frac{(m-1)}{m}\left(1-\frac{v}{m[v+1]}\right) \cdots\left
(1-\frac
{v}{m[v+n-1]}\right) \\
&=& \frac{(m-1)}{m}\left(1-\frac{v}{m}\sum_{i=1}^{n-1} \frac
{1}{v+i} +
O\left(\frac{v^2}{m^2 (v+1)^2}\right)\right)\,.
\end{eqnarray*}
Hence
\begin{align*}
Q(0 \g a) &= 1-p(0 \g v) \\
&= \frac{1}{m} +\frac{v(m-1)}{m^2} \sum
_{i=1}^{n-1} \frac{1}{v+i} +O\left(\frac{v^2}{m^2 (v+1)^2}\right)
=O\left(\frac{1}{m}\right) \,\, \mbox{(uniformly in $v$)}\,.
\end{align*}
It follows that all $Q(i \g a) \leq O(1/m)$, so that $\pi^R\left
(\frac
{v}{m}\right)$ is proportional to
\begin{align*}
&\left[\left(\frac{m}{v}\right)^2\left(\frac{1}{m} \,{+}\frac
{v(m-1)}{m^2} \sum_{i=1}^{n-1} \frac{1}{v+i} \right) {+} O(1)\,{+}
\sum
_{j=1}^{n-1} \frac{1}{(\frac{v}{m}+j)^2} O\left(\frac{1}{m}\right)
\,{-}
\sum_{i=0}^{n-1} \frac{m}{(v+i)^2} \right]^{1/2} \\
&\quad= \left[\sum_{i=1}^{n-1} \frac{1}{v+i}\left(\frac
{(m-1)}{v}-\frac
{m}{(v+i)}\right) + O(1) \right]^{1/2} \\
&\quad= \left[\frac{(m-1)}{v}\sum_{i=1}^{n-1} \frac{i}{(v+i)^2}+
O(1) \right
]^{1/2} \\
&\quad= \sqrt{m-1}\left[\frac{1}{v}\sum_{i=1}^{n-1} \frac{i}{(v+i)^2}+
O\left(\frac{1}{m}\right) \right]^{1/2} \,.
\end{align*}
Combining this with (\ref{eq.large-m}), noting that $v\Gamma(v)
=\Gamma
(v+1)$, and letting $m \rightarrow\infty$, yields the result.
\end{proof}

It follows, of course, that $a$ behaves like $v/m$ for large $m$, where
$v$ has the distribution in~(\ref{eq.large-m-post}).
It is very interesting that this ``large $m$" behavior of the posterior
depends on the data only through $r_0$, the number
of nonzero cell observations.

If, in addition, $n$ is moderately large (but much smaller than $m$),
we can explicitly study the behavior of the posterior mode of $a$.

\begin{teo}
Suppose $m \rightarrow\infty$, $n \rightarrow\infty$, and $n/m
\rightarrow0$. Then (\ref{eq.large-m-post}) has mode
\[
{\hat v} \approx\left\{
\begin{array}{ll}
\frac{(r_0-1.5)}{\log(1+ n/r_0)} & \mbox{if $\frac{r_0}{n}
\rightarrow
0,$} \\
c^*n & \mbox{if $\frac{r_0}{n} \rightarrow c <1,$}
\end{array}
\right.
\]
where $r_0$ is the number of nonzero $x_i$, $c^*$ is the solution to
$c^*\log(1+\frac{1}{c^*}) = c$, and $ f(n,m) \approx g(n,m)$
means $f(n,m) / g(n,m) \rightarrow1$.
The corresponding mode of the reference posterior for $a$ is $\hat{a}^R
= {\hat v}/m$.
\end{teo}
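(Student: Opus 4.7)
The plan is to locate the mode by setting $(\log\Psi)'(v)=0$ and analysing the resulting transcendental equation asymptotically in each regime. From (\ref{eq.large-m-post}),
$$
\log\Psi(v)=\log\Gamma(v+1)-\log\Gamma(v+n)+(r_0-\tfrac32)\log v+\tfrac12\log S(v),\quad S(v)=\sum_{i=1}^{n-1}\frac{i}{(v+i)^2},
$$
so the critical-point equation reads
$$
\psi(v+1)-\psi(v+n)+\frac{r_0-\tfrac32}{v}+\frac{S'(v)}{2S(v)}=0.
$$
My first step is to rewrite the digamma difference as $\psi(v+n)-\psi(v+1)=\sum_{k=1}^{n-1}(v+k)^{-1}$ and compare it with $\int_0^{n-1}(v+t)^{-1}dt$ to obtain $\psi(v+n)-\psi(v+1)=\log(1+n/v)+O(1/v)+O(1/n)$. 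My second step is to show that $S'/(2S)$ is subleading: splitting each sum at the index $i=v$ gives $S(v)\asymp 1+\log(1+n/v)$ and $|S'(v)|=O(1/v)$, whence $|S'/S|=O(1/v)$, which will be dominated by the digamma term at the purported mode in both regimes.

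Thus the mode equation reduces, to leading order, to
$$
\frac{r_0-\tfrac32}{v}\;\approx\;\log\!\Bigl(1+\frac{n}{v}\Bigr).
$$
In Case~(i), where $r_0/n\to0$, I would bootstrap: the ansatz $v\asymp r_0/\log(n/r_0)$ makes $n/v\to\infty$, so $\log(1+n/v)=\log(n/r_0)+\log\log(n/r_0)+o(1)=\log(1+n/r_0)\bigl(1+o(1)\bigr)$, and solving for $v$ yields $\hat v\approx(r_0-\tfrac32)/\log(1+n/r_0)$. In Case~(ii), where $r_0/n\to c\in(0,1)$, the scale $v=c^{*}n$ is forced: $(r_0-\tfrac32)/v\to c/c^{*}$ and $\log(1+n/v)\to\log(1+1/c^{*})$, giving exactly $c^{*}\log(1+1/c^{*})=c$. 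The map $\phi(c^{*}):=c^{*}\log(1+1/c^{*})$ is continuous and strictly increasing on $(0,\infty)$, since $\phi'(c^{*})=\log(1+1/c^{*})-1/(c^{*}+1)>0$ by the elementary inequality $\log(1+u)>u/(1+u)$ for $u>0$, and its range is $(0,1)$, so $c^{*}$ is uniquely determined.

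The main obstacle is self-consistency: I must verify that $\hat v$ lies in the region where the Step~1 and Step~2 approximations are valid, and that the critical point is actually the mode. In Case~(i) self-consistency reduces to checking $v\log(1+n/v)\to\infty$ along $\hat v$, which ensures the $O(1/v)$ error from $S'/S$ and the $O(1/v)+O(1/n)$ correction in the digamma approximation are dwarfed by the leading $\log(1+n/v)$ term; this has to be tracked carefully through the bootstrap. Unimodality in a neighbourhood of $\hat v$ follows from a direct second-derivative estimate $(\log\Psi)''(v)<0$, obtained by differentiating the pieces above once more and bounding them via the same splitting used for $S$. These estimates are routine in nature, but the error propagation in the bootstrap of Case~(i) is the most delicate part of the argument.
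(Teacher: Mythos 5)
Your overall strategy is the same as the paper's: differentiate $\log\Psi$, write the digamma difference as $\sum_{i=1}^{n-1}(v+i)^{-1}$ and replace it by a logarithmic integral, show the $S'/(2S)$ term is negligible, and then match asymptotics separately in the two regimes (the paper perturbs $v=c^*n/(1+\delta)$ and $v=(r_0-1.5)/[(1+\delta)\log(1+n/r_0)]$ and shows $\delta$ can be taken $o(1)$, which is your bootstrap in different clothing). Case (ii) goes through exactly as you describe, and your observations on the monotonicity of $c^*\mapsto c^*\log(1+1/c^*)$ and on the second-derivative check are fine (the paper does not bother with either).

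There is, however, a concrete gap in Case (i). Your error bounds are stated as $O(1/v)$, and your self-consistency criterion is that $v\log(1+n/v)\to\infty$ at $\hat v$. But at the claimed mode $\hat v\approx (r_0-1.5)/\log(1+n/r_0)$ one has $\hat v\,\log(1+n/\hat v)\approx r_0-1.5$, so your criterion holds only when $r_0\to\infty$. The theorem (and Table 1 of the paper) explicitly covers $r_0=O(1)$, where $\hat v=O(1/\log n)\to 0$; there $O(1/\hat v)=O(\log n)$ is of the \emph{same} order as the leading terms $(r_0-1.5)/v$ and $\log(1+n/v)$, so your error terms are not dwarfed and the bootstrap as written does not close. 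The fix is the one the paper uses: the relevant sums are actually $O(1/(v+1))+O(1/n)$, uniformly bounded for bounded $v$ (e.g.\ $\sum_{i}i/(v+i)^3\le\sum_i i^{-2}$, and comparing $\sum_i(v+i)^{-1}$ with $\int_1^n(v+x)^{-1}dx$ rather than $\int_0^{n-1}$), and in the bounded-$v$ subcase one then argues relative to the diverging scale $\log n$, into which an additive $O(1)$ error is absorbed. Without sharpening $O(1/v)$ to $O(1/(v+1))$ and adding this separate treatment of $v\le K$, your argument covers only the part of Case (i) with $r_0\to\infty$.
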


\begin{proof}
Taking the log of (\ref{eq.large-m-post}) and differentiating with
respect to $v$ results in
\[
\label{eq.deriv}
\Psi'(v) = \frac{(r_0-1.5)}{v} - \sum_{i=1}^{n-1} \frac{1}{v+i} -
\frac
{\sum_{i=1}^{n-1} \frac{i}{(v+i)^3}}{\sum_{i=1}^{n-1} \frac
{i}{(v+i)^2}} \,.
\]
Note first that, as $n$ grows, and if $v$ also grows (no faster than
$n$), then
\[
\label{eq.deriv2}
\sum_{i=1}^{n-1} \frac{1}{v+i} = \int_1^n \frac{1}{v+x} \ dx +
O\left
(\frac{1}{v+1}\right)+ O\left(\frac{1}{n}\right) = \log\left
(\frac
{v+n}{v+1}\right) + O\left(\frac{1}{v+1}\right)+ O\left(\frac
{1}{n}\right) \,.
\]
Next,
\begin{align*}
\sum_{i=1}^{n-1} \frac{i}{(v+i)^3} &= \int_1^n \frac{x}{(v+x)^3} \ dx
+ O\left(\frac{1}{(v+1)^2}\right)+ O\left(\frac{1}{n^2}\right) \\
&= \frac{1}{2} \left[\frac{(v+2)}{(v+1)^2} - \frac{(v+2n)}{(v+n)^2}
\right] {+}\, O\left(\frac{1}{(v+1)^2} + \frac{1}{n^2}\right)
{=}\, O\left(\frac{1}{v+1}\right){+}\, O\left(\frac{1}{n}\right)
,
\end{align*}
\begin{align*}
\sum_{i=1}^{n-1} \frac{i}{(v+i)^2} &= \int_1^n \frac{x}{(v+x)^2} \ dx
+ O\left(\frac{1}{(v+1)}\right)+ O\left(\frac{1}{n}\right) \\
&= \frac{v(1+n)}{(v+1)(v+n)} +\log\left(\frac{v+n}{v+1}\right) +
O\left(\frac{1}{v+1} + \frac{1}{n}\right)
\geq\log2
,
\end{align*}
again using that $v$ will not grow faster than $n$. Putting these
together we have that
\[
\Psi'(v) = \frac{(r_0-1.5)}{v} - \log\left(\frac{v+n}{v+1}\right) +
O\left(\frac{1}{v+1}\right)+ O\left(\frac{1}{n}\right) \,.
\]

{\em Case 1.} $\frac{r_0}{n} \rightarrow c$, for $0<c <1$. For this
case, write $v=c^*n/(1+\delta)$ for $\delta$ small, and note that then
\[
\Psi'(v) = \frac{c}{c^*}(1+o(1))(1+\delta) - \log\left(\frac
{(c^*+1)}{c^*}\right) + o(1) \,.
\]
Since $\frac{c}{c^*} - \log\left(\frac{(c^*+1)}{c^*}\right)=0$, it is
clear that $\delta$ can be appropriately chosen as $o(1)$ to make the
derivative zero.

\medskip
{\em Case 2.} $\frac{r_0}{n} \rightarrow0$. Now choose $v= \frac
{(r_0-1.5)}{(1+\delta)\log(1+ n/r_0)}$ and note that $\frac{v}{n}
\rightarrow0$. It follows that
\begin{align*}
&\log\left(1+\frac{n}{r_0}\right) = (\log n - \log r_0
+o(1))(1+\delta)\,\,\,\, \mbox{and} \,\,\,\,\\
&\log\left(\frac{v+n}{v+1}\right) =[\log n - \log(v+1)](1+o(1)) \,.
\end{align*}
Consider first the case $v \rightarrow\infty$. Then
\[
\log(v+1) = (1+o(1))(\log r_0 - \log\log(1+n/r_0)) = (1+o(1)) \log r_0
\,,
\]
so that
\[
\Psi'(v) = (\log n -\log r_0 +o(1)) (1+\delta) - (\log n -\log
r_0)(1+o(1)) +o(1) \,,
\]
and it is clear that $\delta$ can again be chosen $o(1)$ to make this zero.
Lastly, if $v \leq K <\infty$, then $(\log r_0)/(\log n)= o(1)$, so that
$ \Psi'(v) =(\log n)(1+o(1)) (1+\delta) - (\log n)(1+o(1)) +o(1) $, and
$\delta$ can again be chosen $o(1)$ to make this zero,
completing the proof.
\end{proof}

Table~1 gives the limiting behavior of $\hat v$ for various behaviors
of the number of nonzero cells, $r_0$. Only when $r_0=\log n$ does the
posterior mode of $a$ (i.e., $v/m$) equal $1/m$, the value selected by
the reference distance method. Of course, this is not surprising;
empirical Bayes is using a fit to the data to help select $a$ whereas
the reference distance method is pre-experimental.

\begin{table}[h!]
\begin{tabular}{c||c|c|c|c|c|}
\hline
$r_0$ & $cn$ ($0<c<1$)& $n^b$ ($0<b<1$) & $(\log n)^b$ & $\log n$ &
$O(1)$ \\
\hline
$\hat v$ & $c^*n$ & $\frac{n^b}{(1-b) \log n}$ & $(\log n)^{(b-1)}$ & 1
& $O(1/\log n)$ \\
\hline
\end{tabular}
\caption{The limiting behavior of $\hat v$ as $n \rightarrow\infty$,
for various limiting behaviors of $r_0$, the number
of non-zero cells.}\label{table.vv}
\end{table}

\subsection{ Multivariate hypergeometric model}

Let $\cfN_+$ be the set of all nonnegative integers.
Consider a multivariate hypergeometric distribution $\w{Hy}_k(\bm{r}
_k\g
n,\bfR_k,N)$
with the probability mass function
\begin{eqnarray}\label{multi_hyper_geometric}
\w{Hy}_k(\bm{r}_k\g n,\bfR_k, N)
&=&
\frac{{R_1 \choose r_1} \cdots{R_k \choose r_k}
{R_{k+1} \choose r_{k+1}}}
{{N \choose n}}\m,\quad\bm{r}_k\in\cfR_{k,n},
\end{eqnarray}
\[
\cfR_{k,n}=\{\bm{r}_k=(r_1,\cdots,r_k);\quad r_j \in\cfN_+,\quad
r_1+\cdots+r_k\le n\},
\]
where the $k$ unknown parameters $\bfR_k=(R_1,\cdots,R_k)$
are in the parameter space
$\cfR_{k,N}$.
Here and in the following, $R_{k+1}=N-(R_1+\cdots+R_k).$
Notice that the univariate hypergeometric distribution is the special
case when $k=1.$

A natural hierarchical model for the
unknown $\bfR_k$ is to assume that it is multinomial
$\w{Mu}_k(\bfR_k\g N, \bfp_k)$, with \hbox{$\bfp_k \in
\cfP_k\equiv\{\bfp_k=(p_1,\cdots,p_k)\}$}, \hbox{$0\le p_j\le1$,}
and $p_1+\cdots+p_k \le1$.
The probability mass function of $\bfR_k$ is then
\[
\w{Mu}_k(\bfR_k\g N, \bfp_k)
= \frac{N!}{\prod_{j=1}^{k+1}R_j!} \prod_{j=1}^{k+1} p_j^{R_j}.
\]
\citet{BerBerSun2012} prove that the marginal likelihood
of $\bm{r}_k$ depends only on $(n,\bfp_k)$ and it is given by
\begin{eqnarray}
p(\bm{r}_k \g\bfp_k, n, N) &=&p(\bm{r}_k \g\bfp_k, n)=
\sum_{\bfR_k\in\cfN_{k, N}}
\w{Hy}_k(\bm{r}_k\g n,\bfR_k, N) \,\w{Mu}_k \nonumber
(\bfR_k\g N, \bfp_k)
\\ &=&
\w{Mu}_k(\bm{r}_k\g n,\bfp_k), ~\bm{r}_k\in\cfR_{k,n}.
\end{eqnarray}
This reduces to the multinomial problem. Hence, the overall
(approximate) reference prior
for $(\bfR_k\g N, \bfp_k)$ would be Multinomial-Dirichlet $\Di(\bfR
_k\g
1/k,\cdots,1/k).$

\subsection{Multi-normal means}

Let $x_i$ be independent normal with mean $\mu_i$ and variance $1$,
for $i=1\cdots,m$.
We are interested in all the $\mu_i$ and in $|\bfmu|^2=\mu
_1^2+\cdots
+\mu_m^2.$

The natural hierarchical prior modeling approach is to assume that $\mu
_i \stackrel{iid}{\sim} \No(\mu_i\g0,\tau).$
Then, marginally, the $x_i$ are iid $\No(x_1\g0,\sqrt{1+\tau^2})$ and
the reference (Jeffreys) prior for~$\tau^2$ in this marginal model is
\[
\pi^R(\tau^2) \propto(1+\tau^2)^{-1}.
\]
The hierarchical prior for $\bfmu$ (and recommended overall prior) is then
\begin{eqnarray}
\label{eq.stein}
\pi^{o}(\bfmu) =
\int_0^{\infty} \frac{1}{(2\pi\tau^2)^{m/2}} \exp\left(-\frac
{|\bfmu
|^2}{2\tau^2}\right)\,
\frac{1}{1+\tau^2} \,d\tau^2 \,.
\end{eqnarray}

This prior is arguably reasonable from a marginal reference prior
perspective. For the
individual $\mu_i$, it is a shrinkage
prior known to result in Stein-like shrinkage estimates of the form
\[
\hat{\mu}_i = \left(1-\frac{r(|\bm{x}|)}{|\bm{x}|^2} \right) x_i
\,,
\]
with $r(\cdot) \approx p$ for large arguments. Such shrinkage
estimates are
often viewed as actually being superior to the reference posterior
mean, which is just
$x_i$ itself. The reference prior when $|\bf\mu|$ is the parameter
of interest is
\begin{eqnarray}
\label{eq.stein2}
\pi_{|\bfmu|}(\bfmu) \propto\frac{1}{|\bfmu|^{m-1}} \propto
\int_0^{\infty} \frac{1}{(2\pi\tau^2)^{m/2}} \exp\left(-\frac
{|\bfmu
|^2}{2\tau^2}\right)\,
\frac{1}{\tau} \,d\tau^2 \,,
\end{eqnarray}
which is similar to (\ref{eq.stein}) in that, for large values of
$|\bfmu|$, the tails
differ by only one power. Thus the hierarchical prior
appears to be quite satisfactory in terms of its marginal posterior
behavior for
any of the parameters of interest. Of course, the same could be said
for the
single reference prior in (\ref{eq.stein2}); thus here is a case where
one of the
reference priors would be fine for all parameters of interest, and
averaging among
reference priors would not work.

Computation with the reference prior in (\ref{eq.stein2}) can be done
by a simple Gibbs sampler. Computation
with the hierarchical prior in (\ref{eq.stein}) is almost as simple,
with the Gibbs
step for~$\tau^2$
being replaced by the rejection step:
\par
{\em Step 1}. Propose $\tau^2$ from the inverse gamma density
proportional to
\[
\frac{1}{(\tau^2)^{(1+m/2)}} \exp{\left( -\frac{ |\bfmu|^2}{2\tau^2}
\right)}\,,
\]
\par
{\em Step 2}. Accept the result with probability
$\tau^2/(\tau^2 + 1)$ (or else propose again).

\subsection{Bivariate normal problem}

Earlier for the bivariate normal problem, we only considered the two
right-Haar priors.
More generally, there is a continuum of right-Haar priors given as follows.
Define an orthogonal matrix by

\vskip-10mm
\begin{eqnarray}
\nonumber
\bfGamma=
\left(
\begin{array}{rr}
\cos(\beta) & -\sin(\beta) \cr
\sin(\beta) & \cos(\beta) \cr
\end{array}
\right)
\end{eqnarray}
where $-\pi/2<\beta\le\pi/2$. Then it is straightforward to see
that the right-Haar prior based on the transformed data $\bfGamma\bfX
$ is
\begin{eqnarray}
\nonumber
\pi(\mu_1, \mu_2, \sigma_1,\sigma_2,\rho\g\beta) =
\frac{\sin^2(\beta)\,\sigma_1^2+\cos^2(\beta)\,\sigma_2^2
+2\sin(\beta)\cos(\beta)\,\rho\,\sigma_1\,\sigma_2}
{\sigma_1^2\, \sigma_2^2\,(1- \rho^2)}.
\end{eqnarray}
\vadjust{\eject}

We thus have a class of priors indexed by a hyperparameter $\beta$,
and it might be tempting to try the hierarchical approach even though
the class of priors is not a class of proper priors and hence there is
no proper marginal distribution to utilize in finding the hyperprior
for $\beta$. The temptation here arises because
$\beta$ is in a compact set and it seems natural to use the (proper) uniform
distribution (being uniform over the set of rotations is natural.)
The resulting joint prior is
\[
\pi^o(\mu_1, \mu_2, \sigma_1,\sigma_2,\rho) = \frac{1}{\pi}\int
_{-\pi
/2}^{\pi/2} \pi(\mu_1, \mu_2, \sigma_1,\sigma_2,\rho\g\beta)
\,d\beta
\,,
\]
which equals the prior $\pi^A$ in (\ref{eq.S}), since
\[
\int_{-\pi/2}^{\pi/2} \sin(\beta)\cos(\beta)d\beta=0, \quad
\int_{-\pi/2}^{\pi/2} \sin^2(\beta)d\beta=
\int_{-\pi/2}^{\pi/2} \cos^2(\beta)d\beta= \mbox{constant} \,.
\]
Thus the overall prior obtained by the hierarchical approach is the
same prior as obtained by just
averaging the two reference priors. It was stated there that this prior
is inferior
as an overall prior to either reference prior individually, so the
attempt to apply the hierarchical approach to a class
of improper priors has failed.

\medskip
\noindent
{\bf Empirical hierarchical approach:} Instead of integrating out over
$\beta$, one could
find the empirical Bayes estimate $\hat\beta$ and use $\pi(\mu_1,
\mu
_2, \sigma_1,\sigma_2,\rho\g\hat\beta) $
as the overall prior. This was shown in \citet{SunBer2007} to result
in a terrible overall
prior, much
worse than either the individual reference priors, or even $\pi^A$ in
(\ref{eq.S}).

\section{Discussion}
\label{discussion}

When every parameter of a model has the same reference prior, this
prior is very natural to use as the
overall prior. A number of such scenarios were catalogued in Section
\ref{sec.common}. This common
reference prior can depend on the parameterization chosen for the
models (although it will be invariant
to coordinatewise one-to-one-transformations). Indeed, an example was
given in which a strange choice of model parameterization resulted in an
inadequate common reference prior.

The reference distance approach to developing an overall prior is
natural, and seems to work well when the reference priors themselves
are proper.
It also appears to be possible to implement the approach in the case
where the reference priors are improper, by operating on
suitable large compact sets and showing that the result is not
sensitive to the choice of compact set.
Of course, the approach is dependent on the parameterization used for
the model and on
having accepted reference priors available for all the parameters in
the model; it would have been
more satisfying if the overall prior depended only on the model itself.
The answer will also typically
depend on weights used for the individual reference priors, although
this can be viewed as a
positive in allowing more important parameters to have more influence.
The implementation
considered in this paper also utilized a class of candidate priors,
with the purpose of
finding the candidate which minimized the expected risk. The result
will thus depend on the
choice of the candidate class although, in principle, one could
consider the class of
all priors as the candidate class; the resulting minimization problem
would be formidable, however.

The hierarchical approach seems excellent (as usual), and can certainly
be recommended if one can find a natural
hierarchical structure based on a class of proper priors. Such
hierarchical structures naturally occur
in settings where parameters can be viewed as exchangeable random
variables but may not be
available otherwise. In the particular examples considered, the
overall prior obtained for the
multi-normal mean problem seems fine, and the recommended hierarchical
prior for the contingency table
situation is very interesting, and seems to have interesting
adaptations to sparsity; the same can be said for
its empirical Bayes implementation. In contrast, the attempted
application of the hierarchical and empirical Bayes idea
to the bivariate normal problem using the class of right-Haar priors
was highly unsatisfactory, even
though the hyperprior was proper.
This is a clear warning that the hierarchical or empirical Bayes
approach should be
based on an initial class of proper priors.

The failure of arithmetic prior averaging in the bivariate normal
problem was also dramatic; the initial averaging of two right-Haar
priors gave an inferior result, which was duplicated by the continuous
average over all right-Haar priors.
Curiously in this example, the geometric average of the two right-Haar
improper priors seems to be reasonable, suggesting that, if
averaging of improper priors is to be done, the geometric average
should be used.

The `common reference prior' and `reference distance' approaches will
give the same answer when a
common reference prior exists. However, the reference distance and
hierarchical approaches will rarely give the same answer because, even
if the initial class of candidate priors is the
same, the reference distance approach will fix the hyperparameter $a$,
while the hierarchical approach will assign it a reference prior; and, even
if the empirical Bayes version of the hierarchical approach is used,
the resulting estimate of $a$ can be different than that
obtained from the reference distance approach, as indicated in the
multinomial example at the end of Section \ref{sec.post-behavior}.

The `common reference prior' and hierarchical approaches will mostly
have different domains of applicability and
are the recommended approaches when they can be applied. The reference
distance approach will be of
primary utility in situations such as the coefficient of variation
example in Section \ref{sec.coeff-var}, where there is no
natural hierarchical structure to utilize nor common reference prior available.

\begin{acknowledgement}
Berger's work was supported by NSF Grants DMS-0757549-001 and DMS-1007773,
and by Grant 53-130-35-HiCi from King Abdulaziz University.
Sun's work was supported by NSF grants DMS-1007874 and SES-1260806.
The research is also supported by Chinese 111 Project B14019.
\end{acknowledgement}

\end{document}